\def\normo#1{\left\|#1\right\|}
\def\brk#1{\left(#1\right)}
\def\norm#1{\|#1\|}
\def\wt#1{\widetilde{#1}}
\newcommand{\R}{{\mathbb R}}
\newcommand{\Z}{{\mathbb Z}}
\newcommand{\ft}{{\mathcal{F}}}
\newcommand{\les}{{\lesssim}}
\newcommand{\ges}{{\gtrsim}}
\def\norm#1{\|#1\|}
\def\normo#1{\left\|#1\right\|}
\def\wt#1{\widetilde{#1}}
\def\ve#1{\mathbf{#1}}
\newcommand{\F}{\mathcal{F}}
\newcommand{\cir}{\mathbb{S}}
\newcommand{\e}{\varepsilon}
\newcommand{\p}{\partial}
\newcommand{\Del}[1]{}
\numberwithin{equation}{section}
\newtheorem{thm}{Theorem}[section]
\newtheorem{lem}[thm]{Lemma}
\theoremstyle{remark}
\theoremstyle{remark}
\theoremstyle{definition}
\begin{document}
\subjclass[2010]{35Q55}
\keywords{Landau-Lifshitz-Gilbert equation, Schr\"odinger maps, Inviscid limit, Critical Besov Space}

\title[Landau-Lifshitz equation]{The inviscid limit for the Landau-Lifshitz-Gilbert equation in the critical Besov space}
\author[Z. Guo]{Zihua Guo}
\address{School of Mathematical Sciences, Monash University, VIC 3800, Australia \& LMAM, School of Mathematical Sciences, Peking
University, Beijing 100871, China}
\email{zihua.guo@monash.edu}

\author[C. Huang]{Chunyan Huang}
\address{School of Statistics and Mathematics, Central University of Finance and Economics, Beijing
100081, China}
\email{hcy@cufe.edu.cn}

\begin{abstract}
We prove that in dimensions three and higher the Landau-Lifshitz-Gilbert equation with small initial data in the critical Besov space is globally well-posed in a uniform way  with respect to the Gilbert damping parameter. Then we show that the global solution converges to that of the Schr\"odinger maps in the natural space as the Gilbert damping term vanishes. The proof is based on some studies on the derivative Ginzburg-Landau equations.
\end{abstract}

\maketitle

\section{Introduction}
In this paper we study the Cauchy problem for the Landau-Lifshitz-Gilbert (LLG) equation
\begin{align}\label{eq:Lanlif}
\p_t s=as\times \Delta s-\e s\times (s\times \Delta s), \quad s(x,0)=s_0(x),
\end{align}
where $s(x,t):\R^n\times \R\to \cir^2\subset \R^3$,  $\times$
denotes the wedge product in $\R^3$, $a\in \R$ and $\e>0$ is the
Gilbert damping parameter. The equation \eqref{eq:Lanlif} is one of the equations of  ferromagnetic spin chain, which
was proposed by Landau-Lifshitz \cite{LanLif} in studying the
dispersive theory of magnetisation of ferromagnets. Later on, such
equations were also found in the condensed matter physics.  The LLG equation has been studied extensively,  see
\cite{Lak,GuoDing} for an introduction on the equation.

Formally, if $a=0$, then  \eqref{eq:Lanlif} reduces to the heat flow
equations for harmonic maps
\begin{align}\label{eq:Harmap}
\p_t s=-\e s\times (s\times \Delta s), \quad s(x,0)=s_0(x),
\end{align}
and if $\e=0$, then \eqref{eq:Lanlif} reduces to the Schr\"odinger
maps
\begin{align}\label{eq:Schmap}
\p_t s=as\times \Delta s, \quad s(x,0)=s_0(x).
\end{align}
Both special cases have been objects of intense research. The purpose of this paper is to study the inviscid limit of \eqref{eq:Lanlif}, namely, to prove rigorously that the solutions of \eqref{eq:Lanlif} converges to the solutions of \eqref{eq:Schmap} as $\e\to 0$ under optimal conditions on the initial data.

The inviscid limit is an important topic in mathematical physics, and has been studied in various settings, e.g. for hyperbolic-dissipative equations such as Navier-Stokes equation to Euler equation (see \cite{HK} and references therein), for dispersive-dissipative equations such as KdV-Burgers equation to KdV equation (see \cite{Guo2}) and Ginzburg-Landau equation to Schr\"odinger equations (see \cite{Wang02, HW}). The LLG equation \eqref{eq:Lanlif} is an equation with both dispersive and dissipative effects. This can be seen from the stereographic projection transform. It was known that (see
\cite{LakNak}) let
\begin{align}
u=\mathbb{P}(s)=\frac{s_1+is_2}{1+s_3},
\end{align}
where $s=(s_1,s_2,s_3)$ is a solution to \eqref{eq:Lanlif}, then $u$ solves the following complex derivative Ginzburg-Landau type equation
\begin{align}\label{eq:dGL}
\begin{split}
(i\p_t+\Delta-i\e\Delta)u=&\frac{2a\bar
u}{1+|u|^2}\sum_{j=1}^n(\p_{x_j}u)^2-\frac{2i\e\bar
u}{1+|u|^2}\sum_{j=1}^n(\p_{x_j}u)^2\\
u(x,0)=&u_0.
\end{split}
\end{align}
On the other hand, the projection transform has an inverse
\begin{align}
\mathbb{P}^{-1}(u)=\brk{\frac{u+\bar u}{1+|u|^2},\frac{-i(u-\bar u)}{1+|u|^2},\frac{1-|u|^2}{1+|u|^2}}.
\end{align}
Therefore, \eqref{eq:Lanlif} is equivalent to \eqref{eq:dGL} assuming $\mathbb P$ and $\mathbb P^{-1}$ is well-defined, and we
will focus on \eqref{eq:dGL}.  The previous works \cite{IK,IK2,Bej,Bej2,BIK} on the Schr\"odinger maps ($\e=0$) were also based on this transform.  Note that \eqref{eq:dGL} is invariant
under the following scaling transform: for $\lambda>0$
\[u(x,t)\to u(\lambda x,\lambda^2 t), \quad u_0(x)\to u_0(\lambda x).\]
Thus the critical Besov space  is $\dot
B_{2,1}^{n/2}$ in the sense of scaling.

To study the inviscid limit, the crucial task is to obtain uniform well-posedness with respect to the inviscid parameter. Energy method was used in \cite{HK}. For dispersive-dissipative equations, one needs to exploit the dispersive effect uniformly. Strichartz estimates and energy estimates were used in \cite{Wang02} for Ginzburg-Landau equations, and Bourgain space was used in \cite{Guo2} for KdV-Burgers equations. In \cite{WW, HWG} the inviscid limit for the derivative Ginzburg-Landau equations were studied by using the Strichartz estimates, local smoothing estimates and maximal function estimates.  However, these results requires high regularities when applied to equation \eqref{eq:dGL}.
In this paper we will use Bourgain-type space and exploit the null structure that are inspired by the latest development for the Schr\"odinger maps ($\e=0$) (see \cite{BIKT, BIK,Bej,Bej2,IK,IK2,Guo}) to study \eqref{eq:dGL} with small initial data in the critical Besov space. In
\cite{Bej2} and \cite{IK2} it was proved independently that global
well-posedness for \eqref{eq:Schmap} holds for small data in the critical
Besov space. We will extend their results to \eqref{eq:Lanlif} uniformly with respect to $\e$. We exploit the Bourgain space in a different way from both \cite{Bej2} and \cite{IK2}. One of the novelties is the use of $X^{0,1}$-structure that results in many simplifications even for the Schr\"odinger maps. The presence of dissipative term brings many technical difficulties, e.g. the lack of symmetry in time and incompatibility with $X^{s,b}$ structure. We need to overcome these difficulties when extending the linear estimates for the Schr\"odinger equation to the Schr\"odinger-dissipative equation uniformly with respect to $\e$.

By scaling we may assume $a=\pm 1$. From now on, we assume $a=1$ since
the other case $a=-1$ is similar. For $Q\in \cir^{2}$, the space
$\dot{B}^{s}_Q$ is defined by
\[\dot{B}^{s}_Q=\dot{B}^{s}_Q(\R^n;\cir^{2})=\{f: \R^n\to \R^3; f-Q\in \dot{B}^{s}_{2,1},\, |f(x)|\equiv 1 \mbox{ a.e. in } \R^n\},\]
where $\dot{B}^{s}_{2,1}$ is the standard Besov space. It was known
the critical space is $\dot{B}^{n/2}_Q$. The main result of this
paper is

\begin{thm}\label{thm:Schmap}
Assume $n\geq 3$. The LLG equation \eqref{eq:Lanlif} is
globally well-posed for small data $s_0\in
\dot{B}^{n/2}_Q(\R^n;\cir^{2})$, $Q\in \cir^{2}$ in a uniform way
with respect to $\e\in (0,1]$. Moreover, for any $T>0$, the solution
converges to that of Schr\"odinger map \eqref{eq:Schmap} in
$C([-T,T]:\dot{B}^{n/2}_Q)$ as $\e \to 0$.
\end{thm}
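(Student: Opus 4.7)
The plan is to transfer \eqref{eq:Lanlif} to the derivative Ginzburg--Landau equation \eqref{eq:dGL} via the stereographic projection, prove uniform global well-posedness of \eqref{eq:dGL} in the critical Besov space $\dot B^{n/2}_{2,1}$ for small data, and then pass to the limit $\e\to 0$. Smallness of $s_0-Q$ in $\dot B^{n/2}_{2,1}$ makes $\mathbb P$ and $\mathbb P^{-1}$ smooth near the relevant values and yields smallness of $u_0-\mathbb P(Q)$ in the same space, so the equivalence of the two formulations reduces Theorem~\ref{thm:Schmap} to the corresponding statements for \eqref{eq:dGL}.

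For the uniform well-posedness, I would construct a resolution space $F^{n/2}$ at the critical regularity by combining an $X^{0,1}$ Bourgain-type norm adapted to the Schr\"odinger phase $\ta+|\x|^2$ with Strichartz, local smoothing and maximal function norms at each dyadic frequency, and summing $\ell^1$ in frequency as dictated by the Besov scale. The nonlinearity will live in a dual space $N^{n/2}$ built from $X^{0,0}$ and the duals of the dispersive norms. The main linear claim is that Duhamel's formula for the Schr\"odinger--dissipative semigroup $e^{(i+\e)t\De}$ satisfies $\|u\|_{F^{n/2}}\les \|u_0\|_{\dot B^{n/2}_{2,1}}+\|F\|_{N^{n/2}}$ uniformly in $\e\in(0,1]$. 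The principal technical obstacle is that $e^{(i+\e)t\De}$ has only forward-in-time bounds, so the time-symmetric $X^{s,b}$ structure does not directly apply. I would handle this by splitting $t\ge 0$ and $t\le 0$, writing the time-cut-off Duhamel kernel on the Fourier side, and using the bound $|\wh{\chi_{t\ge 0}e^{-\e t|\x|^2}}(\ta)|\les (|\ta|+\e|\x|^2)^{-1}\les \jb{\ta}^{-1}$, which reproduces the standard $X^{s,b}$ factor with constants independent of $\e$.

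Next I would establish multilinear estimates for the nonlinearity $\mathcal{N}(u)=\bar u\sum_j(\p_{x_j}u)^2\cdot g(|u|^2)$, with $g$ smooth, of the form $\|\mathcal{N}(u)\|_{N^{n/2}}\les \|u\|_{F^{n/2}}^3$ together with the corresponding Lipschitz bound, uniformly in $\e$. The heart of the matter is the null structure already exploited in \cite{Bej2,IK2,Guo}: the bilinear symbol $\x\cdot\y$ of $\sum_j\p_{x_j}u\,\p_{x_j}v$ is comparable to $|\x|^2+|\y|^2-|\x+\y|^2$, which is precisely the resonance function of three free Schr\"odinger waves and is controlled by the $X^{0,1}$ modulation of either the output or an input. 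The dissipation only contributes nonnegative $\e|\x|^2$ terms to the relevant weights, hence is harmless. The smooth factor $g(|u|^2)$ is handled by Taylor expansion together with the algebra structure of $\dot B^{n/2}_{2,1}$. Combining the linear and multilinear estimates, a contraction mapping in a small ball of $F^{n/2}$ yields a unique global solution $u^\e$ uniformly in $\e$, and inversion of $\mathbb P$ gives the first assertion of Theorem~\ref{thm:Schmap}.

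For the inviscid limit on a fixed interval $[-T,T]$, let $w=u^\e-u^0$. Moving the $-i\e\De w$ term into the linear dissipative part, $w$ satisfies \eqref{eq:dGL} with forcing $i\e\De u^0$ plus the purely $\e$-multiplied quadratic piece and a bilinear difference of the Schr\"odinger-type nonlinearity evaluated on $u^\e$ and $u^0$. The $\e$-multiplied quadratic piece is bounded by $\e\,C(\|u^\e\|_{F^{n/2}},\|u^0\|_{F^{n/2}})$, and the bilinear difference by $\|w\|_{F^{n/2}_T}(\|u^\e\|_{F^{n/2}}+\|u^0\|_{F^{n/2}})^2$, which is absorbed on the left by smallness of the solutions. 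The remaining term $i\e\De u^0$ suffers a two-derivative loss at the critical scaling; I would handle it by an approximation argument exploiting the $\ell^1$-summability of the $\dot B^{n/2}_{2,1}$ norm: given $\de>0$, split $u^0=P_{\le M}u^0+P_{>M}u^0$, choose $M$ large so the high-frequency tail is below $\de$, then the low-frequency contribution is at most $\e M^2\|u^0\|_{F^{n/2}_T}\to 0$ as $\e\to 0$, while the high-frequency piece is controlled by a constant multiple of $\de$. This yields $\|u^\e-u^0\|_{C([-T,T];\dot B^{n/2}_{2,1})}\to 0$ and completes the proof.
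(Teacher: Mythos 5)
Your reduction to the derivative Ginzburg--Landau equation, the construction of the resolution spaces $F^{n/2}, N^{n/2}$ mixing $X^{0,1}$-type, Strichartz, local smoothing and maximal function norms, the uniform-in-$\e$ linear estimates, the use of the null structure $2\nabla u\cdot\nabla v = (i\p_t+\De)(uv)-(i\p_t+\De)u\cdot v-u\cdot(i\p_t+\De)v$, and the contraction argument all match the paper's strategy for uniform global well-posedness. (The paper's treatment of the maximal-function estimate under dissipation is more delicate than your one-line Fourier bound suggests --- it requires a geometric-series expansion of $(\ta+|\x|^2+i\e|\x|^2)^{-1}$ and a case analysis on whether $|\ta+|\x|^2|\gg\e$ --- but the spirit is the same.)

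The gap is in the inviscid limit. You propose to write the equation for $w=u^\e-u^0$ with forcing $i\e\De u^0$ and then split the \emph{solution} $u^0 = P_{\le M}u^0 + P_{>M}u^0$, claiming the high-frequency forcing $\e\De P_{>M}u^0$ is ``controlled by a constant multiple of $\de$.'' This step fails: the dual norm $N_k$ contains, as a mandatory component, $2^{-k}\|f\|_{L^2_{t,x}}$, so $\|\e\De P_k u^0\|_{N_k}\geq \e\,2^{k}\|P_k u^0\|_{L^2_{t,x}}$, and after weighting by $2^{kn/2}$ and summing over $k>M$ you get an extra unbounded factor $2^k$ relative to $\|u^0\|_{Y^{n/2}}$. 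With $u^0\in\dot B^{n/2}_{2,1}$ only, $\|\e\De u^0\|_{N^{n/2}}$ is generically not even finite, and certainly not $O(\de)$. The paper avoids this by \emph{not} splitting the solution: it splits the initial datum $\varphi=\varphi_K+(\varphi-\varphi_K)$ with $\varphi_K=P_{\le K}\varphi$, writes
$S_T^\e(\varphi)-S_T(\varphi) = [S_T^\e(\varphi)-S_T^\e(\varphi_K)]+[S_T^\e(\varphi_K)-S_T(\varphi_K)]+[S_T(\varphi_K)-S_T(\varphi)]$,
bounds the outer terms by the $\e$-uniform Lipschitz continuity of the solution maps (a direct consequence of the uniform contraction), and bounds the middle term by $O(\e T\,2^{2K}\de)$ using persistence of regularity \eqref{eq:pers} for the band-limited datum $\varphi_K\in\dot B^{(n+4)/2}_{2,1}$ together with the higher-regularity rate estimate \eqref{eq:limitHigh}. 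You should replace your frequency split of $u^0$ by this data-level split plus the Lipschitz/persistence argument; as written, your high-frequency step does not close.
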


As we consider the inviscid limit in the strongest topology (same space as the initial data), no convergence rate is expected. This can be seen from linear solutions for \eqref{eq:dGL}. However, if assuming initial data has higher regularity, one can have convergence rate $O(\e T)$ (see \eqref{eq:limitHigh} below).

\section{Definitions and Notations}

For $x, y\in \R$, $x\les y$ means that there exists a constant $C$
such that $x\leq Cy$, and $x\sim y$ means that $x\les y$ and $y\les
x$. We use $\ft(f)$, $\hat{f}$ to denote the space-time Fourier
transform of $f$, and $\ft_{x_i,t}f$ to denote the Fourier transform
with respect to $x_i,t$.

Let $\eta: \R\to [0, 1]$ be an even, non-negative, radially
decreasing smooth function such that: a) $\eta$ is compactly
supported in $\{\xi:|\xi|\leq 8/5\}$; b) $\eta\equiv 1$ for
$|\xi|\leq 5/4$. For $k\in \Z$ let
$\chi_k(\xi)=\eta(\xi/2^k)-\eta(\xi/2^{k-1})$, $\chi_{\leq
k}(\xi)=\eta(\xi/2^k)$, $\widetilde
\chi_k(\xi)=\sum_{l=-9n}^{9n}\chi_{k+l}(\xi)$, and then define the
Littlewood-Paley projectors $P_k, P_{\leq k}, P_{\geq k}$ on
$L^2(\R^n)$ by
\[\widehat{P_ku}(\xi)=\chi_k(|\xi|)\widehat{u}(\xi),\quad \widehat{P_{\leq
k}u}(\xi)=\chi_{\leq k}(|\xi|)\widehat{u}(\xi),\] and $P_{\geq
k}=I-P_{\leq k-1}$, $P_{[k_1,k_2]}=\sum_{j=k_1}^{k_2}P_j$. We also
define $\wt P_k u=\F^{-1} \widetilde\chi_k(|\xi|)\widehat{u}(\xi)$

Let $\cir^{n-1}$ be the unit sphere in $\R^n$. For $\ve e\in
\cir^{n-1}$, define $\widehat{P_{k,\ve
e}u}(\xi)=\widetilde\chi_{k}(|\xi\cdot \ve
e|)\chi_k(|\xi|)\widehat{u}(\xi)$. Since for $|\xi|\sim 2^k$ we have
\[\sum_{l=-5n}^{5n}\chi_{k+l}(\xi_1)+\cdots+\sum_{l=-5n}^{5n}\chi_{k+l}(\xi_n)\sim
1,\] then let
\[\beta_k^j(\xi)=\frac{\sum_{l=-5n}^{5n}\chi_{k+l}(\xi_j)}{\sum_{j=1}^n\sum_{l=-5n}^{5n}\chi_{k+l}(\xi_j)}\cdot \sum_{l=-1}^1\chi_{k+l}(|\xi|), \quad j=1,\cdots, n.\]
Define the operator $\Theta_k^j$ on $L^2(\R^n)$ by
$\widehat{\Theta_k^jf}(\xi)=\beta_k^j(\xi)\hat f(\xi)$, $1\leq j\leq n$. Let $\ve
e_1=(1,0,\cdots, 0), \cdots, \ve e_n=(0,\cdots, 0,1)$. Then we have
\begin{align}\label{eq:Pkdec}
P_k=\sum_{j=1}^nP_{k,\ve e_j}\Theta_k^j.
\end{align}
For any $k\in \Z$, we define the modulation projectors $Q_k, Q_{\leq k}, Q_{\geq k}$ on $L^2(\R^n\times \R)$ by
\[\widehat{Q_ku}(\xi,\tau)=\chi_k(\tau+|\xi|^2)\widehat{u}(\xi,\tau),\quad \widehat{Q_{\leq
k}u}(\xi,\tau)=\chi_{\leq k}(\tau+|\xi|^2)\widehat{u}(\xi,\tau),\]
and $Q_{\geq k}=I-Q_{\leq k-1}$,
$Q_{[k_1,k_2]}=\sum_{j=k_1}^{k_2}Q_j$.

For any $\ve e\in \cir^{n-1}$, we can decompose $\R^n=\lambda \ve e
\oplus H_{\ve e}$, where $H_{\ve e}$ is the hyperplane with normal vector
$\ve e$, endowed with the induced measure. For $1\leq p, q< \infty$,
we define $L_{\ve e}^{p,q}$ the anisotropic Lebesgue space by
\[\norm{f}_{L_{\ve e}^{p,q}}=\brk{\int_\R\brk{\int_{H_{\ve e}\times \R} |f(\lambda \ve e+y,t)|^qdydt}^{p/q}d\lambda}^{1/p}\]
with the usual definition if $p=\infty$ or $q=\infty$. We write
$L_{\ve e_j}^{p,q}=L_{x_j}^pL_{\bar{x_j},t}^q$. We use
$\dot{B}^s_{p,q}$ to denote the homogeneous Besov spaces on $\R^n$
which is the completion of the Schwartz functions under the norm
\[\norm{f}_{\dot{B}^s_{p,q}}=(\sum_{k\in \Z}2^{qsk}\norm{P_kf}_{L^p}^q)^{1/q}.\]

To exploit the null-structure we also need the Bourgain-type space associated to the Schr\"odinger equation.
In this paper we use the modulation-homogeneous version as in \cite{Bej2, Guo}. We define $X^{0,b,q}$ to be the completion of the space of Schwartz functions with the norm
\begin{align}\label{eq:Xbq}
\norm{f}_{X^{0,b,q}}=(\sum_{k\in \Z}2^{kbq}\norm{Q_k
f}_{L^2_{t,x}}^q)^{1/q}.
\end{align}
If $q=2$ we simply write $X^{0,b}=X^{0,b,2}$. By the Plancherel's
equality we have
$\norm{f}_{X^{0,1}}=\norm{(i\p_t+\Delta)f}_{L^2_{t,x}}$. Since
$X^{0,b,q}$  is not closed under conjugation, we also define the
space $\bar X^{0,b,q}$ by the norm $\norm{f}_{\bar
X^{0,b,q}}=\norm{\bar f}_{X^{0,b,q}}$, and similarly write $\bar
X^{0,b}=\bar X^{0,b,2}$. It's easy to see that $X^{0,b,q}$ function
is unique modulo solutions of the homogeneous Schr\"odinger
equation. For a more detailed description of the $X^{0,b,p}$ spaces
we refer the readers to \cite{Tataru98} and \cite{Tao01}. We use
$X_+^{0,b,p}$ to denote the space restricted to the interval
$[0,\infty)$:
\[\norm{f}_{X_+^{0,b,p}}=\inf_{\tilde f: \tilde f=f \mbox{ on } t\in [0,\infty)}\norm{\tilde f}_{X^{0,b,p}}.\]
In particular, we have
\begin{align}\label{eq:X01ext}
\norm{f}_{X_+^{0,1}}\sim \norm{\tilde f}_{X^{0,1}}
\end{align}
where $\tilde f=f(t,x)1_{t\geq 0}+f(-t,x)1_{t<0}$.

Let $L=\partial_t-i\Delta$ and $\bar
L=\partial_t+i\Delta$. We define the main
dyadic function space. If $f(x,t)\in L^2(\R^n\times \R_+)$ has
spatial frequency localized in $\{|\xi|\sim 2^k\}$, define
\begin{align*}
\norm{f}_{F_{k}}=&\norm{f}_{X_+^{0,1/2,\infty}}+\norm{f}_{L_t^\infty
L_x^2}+\norm{f}_{L_t^2 L_x^\frac{2n}{n-2}}\\
&+2^{-(n-1)k/2}\sup_{\ve{e}\in
\cir^{n-1}}\norm{f}_{L_{\ve{e}}^{2,\infty}}+2^{k/2}\sup_{|j-k|\leq
20}\sup_{\ve{e}\in
\cir^{n-1}}\norm{P_{j,\ve e} f}_{L_{\ve{e}}^{\infty,2}},\\
\norm{f}_{Y_{k}}=&\norm{f}_{L_t^\infty L_x^2}+\norm{f}_{L_t^2
L_x^\frac{2n}{n-2}}+2^{-(n-1)k/2}\sup_{\ve{e}\in
\cir^{n-1}}\norm{f}_{L_{\ve{e}}^{2,\infty}}\\
&+2^{-k}\inf_{f=f_1+f_2}(\norm{L f_1}_{L_{t,x}^2}+\norm{\bar L f_2}_{L_{t,x}^2}),\\
\norm{f}_{Z_k}=&2^{-k}\norm{L f}_{L^2_{t,x}}\\
\norm{f}_{N_{k}}=&\inf_{f=f_1+f_2+f_3}(\norm{f_1}_{L_t^1L_x^2}+2^{-k/2}\sup_{\ve{e}\in
\cir^{n-1}}\norm{f_2}_{L_{\ve{e}}^{1,2}}+\norm{f_3}_{X_+^{0,-1/2,1}})+2^{-k}\norm{f}_{L^2_{t,x}}.
\end{align*}
Then we define the space $F^s, Y^s, Z^s, N^{s}$ with the following norm
\begin{align*}
\norm{u}_{F^{s}}=\sum_{k\in \Z}2^{ks}\norm{P_k u}_{F_{k}},\quad& \norm{u}_{Y^{s}}=\sum_{k\in \Z}2^{ks}\norm{P_k
u}_{Y_{k}},\\
\norm{u}_{Z^{s}}=\sum_{k\in \Z}2^{ks}\norm{P_k u}_{Z_{k}},\quad&
\norm{u}_{N^{s}}=\sum_{k\in \Z}2^{ks}\norm{P_k
u}_{N_{k}}.
\end{align*}
Obviously, $F_k\cap Z_k\subset Y_k$, and thus $F^s\cap Z^s\subset Y^s$.
In the end of this section, we present a standard extension lemma (See Lemma 5.4 in \cite{book}) giving the relation between $X^{s,b}$ and other space-time norm.
\begin{lem}\label{lem:ext}
Let $k\in \Z$ and $B$ be a space-time norm satisfying with some $C(k)$
\[\norm{e^{it_0}e^{it\Delta}P_kf}_B\leq C(k)\norm{P_kf}_2\]
for any $t_0\in \R$ and $f\in L^2$.  Then
\[\norm{P_ku}_B\les C(k) \norm{P_ku}_{X^{0,1/2,1}}.\]
\end{lem}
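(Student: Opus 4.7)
The plan is to reduce the target estimate to the hypothesised free-evolution bound via a dyadic modulation decomposition combined with a Fourier-side substitution that realises each modulation piece as a continuous superposition of modulated free Schr\"odinger solutions. I would first decompose $P_k u = \sum_{j\in\Z} Q_j P_k u$, and then, for each $f_j := Q_j P_k u$ (whose space-time Fourier transform is supported in $\{|\xi|\sim 2^k,\ |\tau+|\xi|^2|\sim 2^j\}$), perform the change of variables $\lambda = \tau + |\xi|^2$ and set $\widehat{g_\lambda}(\xi) := \widehat{f_j}(\xi,\, \lambda - |\xi|^2)$. Fourier inversion would then yield the representation
\[ f_j(x,t) = \int_{|\lambda|\sim 2^j} e^{it\lambda}\, (e^{it\Delta}g_\lambda)(x)\, d\lambda, \]
exhibiting $f_j$ as a continuous superposition of modulated free evolutions of $L^2$ functions at spatial frequency $2^k$.

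Next I would bring the $B$-norm inside by Minkowski and apply the hypothesis---the factor $e^{it\lambda}$ plays the role of the modulation $e^{it_0}$ in the assumption, and $g_\lambda$ is spatially frequency-localised at $2^k$---to obtain
\[ \norm{f_j}_B \leq C(k)\int_{|\lambda|\sim 2^j}\norm{g_\lambda}_{L^2_x}\, d\lambda. \]
A Cauchy--Schwarz in $\lambda$ over an interval of length $O(2^j)$, together with Plancherel in $(\xi,\lambda)$, would then give $\norm{f_j}_B \les C(k)\, 2^{j/2}\,\norm{f_j}_{L^2_{t,x}}$, and summing over $j\in\Z$ completes the argument:
\[ \norm{P_ku}_B \leq \sum_{j\in\Z}\norm{Q_j P_k u}_B \les C(k)\sum_{j\in\Z} 2^{j/2}\norm{Q_j P_k u}_{L^2_{t,x}} = C(k)\,\norm{P_ku}_{X^{0,1/2,1}}. \]

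The only mildly delicate point is the interpretation of the modulation factor $e^{it_0}$ in the hypothesis, which must be strong enough to absorb the phase $e^{it\lambda}$ arising from the Fourier substitution above. This will be transparent for every norm that is assembled into $F_k$, $Y_k$ and $N_k$ in the paper (in particular the Strichartz-type $L^p_t L^q_x$ norms and the anisotropic $L^{p,q}_{\ve e}$ norms), so no extra verification is required when Lemma~\ref{lem:ext} is invoked in the linear estimates developed later.
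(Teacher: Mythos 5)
Your proof is the standard one and matches the argument referenced by the paper (Lemma 5.4 of \cite{book}, which the paper cites in lieu of a proof): dyadic modulation decomposition, foliation of each piece as a continuous superposition of modulated free Schr\"odinger waves via the shear $\lambda=\tau+|\xi|^2$, Minkowski combined with the hypothesis, then Cauchy--Schwarz in $\lambda$ and Plancherel. You are also right to flag the hypothesis as printed: $e^{it_0}$ is a constant unimodular factor, making the assumption trivially equivalent to the unmodulated free estimate (which is not enough); it should read $e^{itt_0}$, a genuine time modulation, which is exactly what your phase $e^{it\lambda}$ requires and is what the cited reference in fact assumes.
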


\section{Uniform linear estimates}
In this section we prove some uniform linear estimates for the equation \eqref{eq:dGL}  with respect to the dissipative parameter. First we recall the known linear estimates for the Schr\"odinger equation, see \cite{KT} and \cite{IK2}.

\begin{lem}\label{lem:linearStr}
Assume $n\geq 3$. For any $k\in \Z$ we have
\begin{align}
\norm{e^{it\Delta}P_kf}_{L_t^2 L_x^\frac{2n}{n-2}\cap L_t^\infty L_x^2}\les& \norm{P_kf}_2,\\
\sup_{\ve{e}\in
\cir^{n-1}}\norm{e^{it\Delta}P_kf}_{L_{\ve{e}}^{2,\infty}}\les& 2^{\frac{(n-1)k}{2}}\norm{P_kf}_2,\\
\sup_{\ve{e}\in
\cir^{n-1}}\norm{e^{it\Delta}P_{k,\ve e}f}_{L_{\ve{e}}^{\infty,2}}\les& 2^{-\frac{k}{2}}\norm{P_kf}_2.
\end{align}
\end{lem}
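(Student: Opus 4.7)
The plan is to prove each of the three classical Schr\"odinger-type estimates separately, following the standard Fourier-analytic arguments from \cite{KT, IK2}. The first is an endpoint Strichartz bound, the second a maximal function estimate of Ionescu-Kenig type, and the third a local smoothing estimate in the spirit of Kenig-Ponce-Vega.

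For the Strichartz bound, I would invoke the Keel-Tao endpoint inequality for the admissible pair $(q,r)=(2,\tfrac{2n}{n-2})$, which is valid precisely for $n\geq 3$. Since $P_k$ commutes with $e^{it\Delta}$, the frequency-localized bound is immediate, and the $L^\infty_t L^2_x$ piece is just the unitarity of $e^{it\Delta}$ on $L^2$.

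For the maximal function estimate, rotational invariance reduces matters to $\ve e=\ve e_1$. A $TT^{\ast}$ argument then recasts the estimate as a convolution bound
\begin{align*}
\norm{K_k\ast F}_{L^2_{x_1}L^\infty_{\bar x_1,t}}\les 2^{(n-1)k}\norm{F}_{L^2_{x_1}L^1_{\bar x_1,t}}
\end{align*}
for the kernel $K_k(x,t)=\int e^{ix\cdot\xi-it|\xi|^2}\widetilde\chi_k(|\xi|)\,d\xi$. A stationary-phase analysis of $K_k$---exploiting the $n-1$ nonvanishing principal curvatures of the paraboloid transverse to the group velocity $-2\xi$---combined with a Young/Schur argument in the $x_1$-variable yields the desired factor $2^{(n-1)k}$, from which the square root is recovered upon unwinding $TT^{\ast}$. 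For the local smoothing estimate, the strengthened cutoff $P_{k,\ve e}$ is essential: on its Fourier support one has $|\xi\cdot\ve e|\sim 2^k$, which renders the forthcoming change of variables nondegenerate. Setting $\ve e=\ve e_1$ and writing $\xi=(\xi_1,\bar\xi)$, the substitution $\eta=-\xi_1^2-|\bar\xi|^2$ (for fixed $\bar\xi$ and fixed sign of $\xi_1$) has Jacobian $|2\xi_1|\sim 2^k$, so Plancherel in the variables $(\bar\xi,\eta)$ yields the $L^2_{\bar x_1,t}$ bound $2^{-k/2}\norm{\widehat f}_{L^2}$ uniformly in $x_1$, and taking a supremum in $x_1$ is immediate.

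The main technical burden lies in the stationary-phase analysis for the maximal function estimate, particularly at the critical dimension $n=3$, where the pointwise decay of $K_k$ in $x_1$ is only marginally integrable and one must carefully split into regimes based on whether the phase $x\cdot\xi-t|\xi|^2$ admits a critical point in the support of $\widetilde\chi_k$. The local smoothing step reduces to a single change of variables once the directional localization $P_{k,\ve e}$ has been exploited, and the Strichartz bound is a direct citation of \cite{KT}.
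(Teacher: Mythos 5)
The paper does not actually prove this lemma: it opens Section 3 with ``First we recall the known linear estimates for the Schr\"odinger equation, see \cite{KT} and \cite{IK2},'' and then moves directly to the uniform-in-$\e$ extensions (Lemmas 3.2--3.4). Your sketch correctly reconstructs the standard arguments that underlie those citations: the Keel--Tao endpoint machinery for the $L^2_tL_x^{2n/(n-2)}$ bound, a $TT^{\ast}$ argument with stationary phase and Young/Schur in the $x_1$ variable for the $L_{\ve e}^{2,\infty}$ maximal estimate (with the constant $2^{(n-1)k}$ for $TT^\ast$ halving to $2^{(n-1)k/2}$ for $T$, consistent with scaling), and the Kenig--Ponce--Vega change of variables $\eta=-\xi_1^2-|\bar\xi|^2$ with Jacobian $|2\xi_1|\sim 2^k$ plus Plancherel for the $L_{\ve e}^{\infty,2}$ local-smoothing estimate, where the directional cutoff $P_{k,\ve e}$ is indeed what makes the Jacobian nondegenerate. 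This is the same route the cited references take, so your proposal is aligned with the paper's (implicit) proof. One minor quibble: you describe $n=3$ as ``marginally integrable'' for the kernel decay in $x_1$, but $\sup_{\bar x_1,t}|K_0(x_1,\cdot)|\lesssim (1+|x_1|)^{-n/2}$ gives $|x_1|^{-3/2}$ at $n=3$, which is integrable with room to spare; the genuinely marginal case is $n=2$, which the hypothesis excludes. This mischaracterization of the difficulty does not affect correctness.
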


\begin{lem}\label{lem:linearXsb}
Assume $n\geq 3$, $u,F$ solves the equation: for $\e>0$
\[u_t-i\Delta u-\e \Delta u=F(x,t), \quad u(x,0)=u_0.\]
Then for $b\in [1,\infty]$
\begin{align}
\norm{P_ku}_{X_+^{0,1/2,b}}\les&
\norm{P_k u_0}_{L^2}+\norm{P_kF}_{X_+^{0,-1/2,b}},\\
\norm{P_ku}_{Z_k}\les&\e^{1/2}\norm{P_ku_0}_{L^2}+
2^{-k}\norm{P_kF}_{L_{t,x}^2},
\end{align}
where the implicit constant is independent of $\e$.
\end{lem}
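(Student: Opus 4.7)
The plan is to prove the two estimates separately; since $P_k$ commutes with the evolution and with modulation projectors, it suffices to treat $u$, $u_0$, $F$ already frequency-localized to $|\x|\sim 2^k$. The $Z_k$ estimate can be extracted directly from the equation. Writing $Lu=(\p_t-i\De)u=\e\De u+F$ yields
\[
2^{-k}\norm{LP_ku}_{L^2_{t,x}}\les\e\,2^k\norm{P_ku}_{L^2_{t,x}}+2^{-k}\norm{P_kF}_{L^2_{t,x}},
\]
so the second summand is already of the desired form. For the first, take the spatial Fourier transform and solve the resulting ODE:
\[
\wh u(\x,t)=e^{-(i+\e)|\x|^2t}\wh u_0(\x)+\int_0^t e^{-(i+\e)|\x|^2(t-s)}\wh F(\x,s)\,ds,
\]
so that $|e^{-(i+\e)|\x|^2t}|=e^{-\e|\x|^2t}$ together with Young's inequality $L^1_s*L^2_s\to L^2_t$ gives
\[
\norm{\wh u(\x,\cdot)}_{L^2_t}^2\les (\e|\x|^2)^{-1}|\wh u_0(\x)|^2+(\e|\x|^2)^{-2}\norm{\wh F(\x,\cdot)}_{L^2_t}^2
\]
uniformly in $\x$. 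Multiplying by $\e^2|\x|^4$, integrating over $|\x|\sim 2^k$, and taking square roots delivers $\e\,2^k\norm{P_ku}_{L^2_{t,x}}\les\e^{1/2}\norm{P_ku_0}_{L^2}+2^{-k}\norm{P_kF}_{L^2_{t,x}}$, which closes the $Z_k$ bound with $\e$-uniform constants.

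For the $X_+^{0,1/2,b}$ estimate, I would split $u=u_h+u_p$ into homogeneous and Duhamel parts and bound each by constructing an extension to all of $\R$. The guiding principle is that on the full-line Fourier side the dissipative symbol $[i(\ta+|\x|^2)+\e|\x|^2]^{-1}$ is pointwise dominated in modulus by the pure-Schr\"odinger symbol $|\ta+|\x|^2|^{-1}$, so once a good extension is in hand the standard $X^{s,b}$ theory applies $\e$-uniformly. For $u_h$ I would take $\ti u_h(t)=\psi(t)e^{it\De}\bigl(e^{-\e|t|(-\De)}u_0\bigr)$ with $\psi\in C_c^\I(\R)$ equal to $1$ on a neighborhood of $[0,\I)\cap\supp\psi$, so that $\F_{x,t}\ti u_h(\x,\ta)=\wh u_0(\x)\,m_\e(\x,\ta+|\x|^2)$ for $m_\e(\x,\cdot)=\F_t\bigl[\psi(t)e^{-\e|\x|^2|t|}\bigr]$; the $X^{0,1/2,b}$ bound then reduces to the uniform claim $\sum_j 2^{jb}\norm{\chi_j(\cdot)m_\e(\x,\cdot)}_{L^2}\les 1$. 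For $u_p$ I would fix a near-optimal extension $\ti F$ of $F$ attaining (up to constants) the infimum in $\norm{F}_{X_+^{0,-1/2,b}}$, decompose it dyadically in modulation, invert the multiplier above piece by piece, and fold in an additional homogeneous correction (of the $u_h$ type) to cancel the jump induced at $t=0$.

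The hard part will be the uniform-in-$\e$ control on $u_h$, since the dissipative semigroup is not time-reversible and the convenient reflection extension \eqref{eq:X01ext} is unavailable. The crucial computation is the pointwise-in-$\x$ bound on $m_\e$ above, which I would verify by splitting into the regimes $\e|\x|^2\les 1$ (where $e^{-\e|\x|^2|t|}\approx 1$ on $\supp\psi$ and $m_\e\approx\wh\psi$ is rapidly decaying) and $\e|\x|^2\ges 1$ (where $m_\e$ is essentially the Lorentzian $(\e|\x|^2)/(\s^2+(\e|\x|^2)^2)$ of scale $\e|\x|^2\geq 1$, whose $\sum_j 2^{jb}L^2$-norm is bounded independently of that scale).
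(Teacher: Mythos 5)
Your $Z_k$ argument is correct and is exactly the (compressed) argument in the paper: bound $2^{-k}\norm{LP_ku}_{L^2}$ by $\e 2^k\norm{P_ku}_{L^2}+2^{-k}\norm{P_kF}_{L^2}$, then control the first term via the explicit Fourier-side Duhamel formula as you do. Your $X_+^{0,1/2,b}$ outline is also the same as the paper's in its two essential ingredients: for the Duhamel part, pick a near-optimal extension $\tilde F$ and use the pointwise domination $|\tau+|\x|^2+i\e|\x|^2|^{-1}\leq|\tau+|\x|^2|^{-1}$ to transfer to the pure-Schr\"odinger multiplier; for the homogeneous part, use the extension $e^{it\Delta+\e|t|\Delta}u_0$, whose space-time Fourier transform is the Lorentzian $(\e|\x|^2)^{-1}\bigl(1+(\tau+|\x|^2)^2/(\e|\x|^2)^2\bigr)^{-1}\hat u_0(\x)$, and read off the $X^{0,1/2,b}$ bound directly.

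The one genuine flaw is the time cutoff $\psi$ you attach to the homogeneous extension. As stated it is internally inconsistent: a $\psi\in C_c^\I(\R)$ cannot equal $1$ on a neighborhood of $[0,\I)\cap\supp\psi$, and more to the point, any compactly supported $\psi$ fails to give an extension of $u_h$ to all of $t\in[0,\I)$, which the $X_+^{0,1/2,b}$ infimum requires. You introduced $\psi$ to rescue the regime $\e|\x|^2\lesssim 1$, where you worried the bare Lorentzian degenerates --- but in fact it does not. Without any cutoff, for fixed $\x$ the quantity $\sum_j 2^{jb/2}\norm{\chi_j(\cdot)\cdot(\e|\x|^2)((\cdot)^2+(\e|\x|^2)^2)^{-1}}_{L^2}^b$ is $O(1)$ for every $\e|\x|^2>0$ and every $b\in[1,\I]$: scales $2^j\leq\e|\x|^2$ contribute $\sum 2^j/(\e|\x|^2)\lesssim 1$ and scales $2^j\geq\e|\x|^2$ contribute $\sum \e|\x|^2 2^{-j}\lesssim 1$. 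So the case split and the cutoff are unnecessary, and removing them makes $e^{it\Delta+\e|t|\Delta}u_0$ a genuine extension of $u_h$, which is exactly what the paper uses. Relatedly, your remark that the reflection trick behind \eqref{eq:X01ext} is ``unavailable'' is only half right: the naive reflection $u_h(|t|)$ fails, but reflecting only the dissipative factor (i.e.\ $e^{it\Delta+\e|t|\Delta}u_0$ rather than $e^{i|t|\Delta+\e|t|\Delta}u_0$) is precisely the substitute, and it is $\e$-uniform. Finally, your observation that the Duhamel extension needs a homogeneous correction to cancel its nonzero value at $t=0$ is a point the paper glosses over; it is harmless because $X^{0,b,q}$ functions are only defined modulo free Schr\"odinger waves, but you are right to flag it.
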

\begin{proof}
First we show the second inequality.  We have
\begin{align}
u=e^{it\Delta+\e t\Delta}u_0+\int_0^te^{i(t-s)\Delta+\e (t-s)\Delta}F(s)ds
\end{align}
and thus
\begin{align*}
\norm{P_ku}_{Z_k}\les& \e 2^k\norm{P_ku}_{L^2_{t,x}}+
2^{-k}\norm{P_kF}_{L_{t,x}^2}\\
\les&\e^{1/2}\norm{P_ku_0}_{L^2}+2^{-k}\norm{P_kF}_{L_{t,x}^2}.
\end{align*}
Now we show the first inequality. We only prove the case $b=\infty$ since the other cases are similar.
First we assume $F=0$. Then $u=e^{it\Delta+\e t\Delta}u_0$. Let
$\tilde u=e^{it\Delta+\e |t|\Delta}u_0$, then $\tilde u$ is an
extension of $u$. Then
\begin{align*}
\norm{P_ku}_{X_+^{0,1/2,\infty}}\les
&\sup_j 2^{j/2}\norm{\F_t(e^{-it|\xi|^2-\e|t|\cdot |\xi|^2})(\tau)\hat u_0(\xi)\chi_k(\xi)\chi_j(\tau+|\xi|^2)}_{L_{\tau,\xi}^2}\\
\les&\sup_j
2^{j/2}\norm{(\e|\xi|^2)^{-1}\bigg(1+\frac{|\tau+|\xi|^2|^2}{(\e|\xi|^2)^2}\bigg)^{-1}\hat
u_0(\xi)\chi_k(\xi)\chi_j(\tau+|\xi|^2)}_{L_{\tau,\xi}^2}\\
\les& \norm{P_ku_0}_2.
\end{align*}
Next we assume $u_0=0$. Fix an extension $\tilde F$ of $F$ such that
\[\norm{\tilde F}_{X^{0,-1/2,\infty}}\leq
2\norm{F}_{X_+^{0,-1/2,\infty}}.\] Then define $\tilde
u=\F^{-1}\frac{1}{\tau+|\xi|^2+i\e |\xi|^2}\F \tilde F$. We see
$\tilde u$ is an extension of $u$ and then
\begin{align*}
\norm{P_ku}_{X_+^{0,1/2,\infty}}\les& \norm{\tilde
u}_{X^{0,1/2,\infty}}\\
\les&\sup_j 2^{j/2}\norm{\chi_k(\xi)\chi_j(\tau+|\xi|^2)\frac{1}{\tau+|\xi|^2+i\e |\xi|^2}\F \tilde F}_{L_{\tau,\xi}^2}\\
\les&\norm{\tilde F}_{X^{0,-1/2,\infty}}\les
\norm{F}_{X_+^{0,-1/2,\infty}}.
\end{align*}
Thus we complete the proof.
\end{proof}

\begin{lem}\label{lem:max}
Let $n\geq 3$, $k\in \Z,\e\geq 0$. Assume $u,F$ solves the equation
\[u_t-i\Delta u-\e \Delta u=F(x,t), \quad u(x,0)=u_0.\]
Then for any $\ve e\in \cir^{n-1}$ we have
\begin{align}
\norm{P_{k}u}_{L_{\ve e}^{2,\infty}}\les&
2^{k(n-1)/2}\norm{u_0}_{L^2}+2^{k(n-2)/2}\sup_{\ve e\in
\cir^{n-1}}\norm{F}_{L_{\ve e}^{1,2}},\\
\norm{P_{k,\ve e}u}_{L_{\ve e}^{\infty,2}}\les&
2^{-k/2}\norm{u_0}_{L^2}+2^{-k}\sup_{\ve e\in
\cir^{n-1}}\norm{F}_{L_{\ve e}^{1,2}},
\end{align}
where the implicit constant is independent of $\e$.
\end{lem}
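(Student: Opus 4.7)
The plan is to split the solution via Duhamel into $u=u_1+u_2$ with $u_1(t)=e^{t(i+\e)\Delta}u_0$ and $u_2(t)=\int_0^t e^{(t-s)(i+\e)\Delta}F(s)ds$, and to exploit throughout the factorization
$e^{t(i+\e)\Delta}f = G_{\e t}\ast_x e^{it\Delta}f$
where $G_{\e t}$ is the Gaussian heat kernel, a positive probability density. All the uniformity in $\e$ will come from the fact that $G_{\e t}$ factorizes as $G^1_{\e t}(y_1)G^{n-1}_{\e t}(\bar y)$ along $\ve e$ and $H_{\ve e}$, and both factors are positive and have unit integral.

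For the homogeneous maximal estimate, setting $v=e^{it\Delta}u_0$ and $V_\infty(\lambda)=\sup_{\bar x,t\ge 0}|P_k v(\lambda\ve e+\bar x,t)|$, the pointwise bound $|P_k u_1|\le G_{\e t}\ast|P_k v|$ and integration of the $\bar y$ variable give
\[
\sup_{\bar x,t\ge 0}|P_k u_1(\lambda\ve e+\bar x,t)|\ \le\ \sup_{t\ge 0}(G^1_{\e t}\ast_\lambda V_\infty)(\lambda)\ \lesssim\ M V_\infty(\lambda),
\]
where $M$ is the 1D Hardy--Littlewood maximal function. Its $L^2_\lambda$ boundedness together with Lemma \ref{lem:linearStr} yields $\|P_k u_1\|_{L_{\ve e}^{2,\infty}}\lesssim 2^{(n-1)k/2}\|u_0\|_2$ uniformly in $\e$.

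For the homogeneous smoothing estimate, the approach through positivity loses the oscillation, so I would instead extend $u_1$ by even reflection in $t$: set $\tilde u_1(x,t)=e^{it\Delta+\e|t|\Delta}u_0$. A direct calculation gives
\[
\F\tilde u_1(\xi,\tau)=\frac{2\e|\xi|^2\,\hat u_0(\xi)}{(\tau+|\xi|^2)^2+\e^2|\xi|^4},
\]
a Lorentzian of width $\sim\e|\xi|^2$ on the Schr\"odinger cone. Dyadic analysis in the modulation variable gives the tent-shaped bound
\[
2^{j/2}\norm{Q_j P_k\tilde u_1}_{L^2}\ \lesssim\ \|u_0\|_2\,\min\!\Big(\frac{2^j}{\e 2^{2k}},\ \frac{\e 2^{2k}}{2^j}\Big),
\]
which sums to $\|P_k\tilde u_1\|_{X^{0,1/2,1}}\lesssim\|u_0\|_2$ uniformly in $\e$. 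Applying Lemma \ref{lem:ext} with $B=L_{\ve e}^{\infty,2}$ and $C(k)=2^{-k/2}$ (justified by Lemma \ref{lem:linearStr} for the $P_{k,\ve e}$ projection) and restricting to $t\ge 0$ gives $\|P_{k,\ve e}u_1\|_{L_{\ve e}^{\infty,2}}\lesssim 2^{-k/2}\|u_0\|_2$. This is the step I expect to be the main obstacle, since it is here that the replacement of the exact dispersion relation $\tau=-|\xi|^2$ by a fuzzy strip must be absorbed uniformly in~$\e$.

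For the inhomogeneous part, $u_2$ solves the same dissipative equation with zero initial data and source $F$. Applying Lemma \ref{lem:linearXsb} with $b=1$ and then Lemma \ref{lem:ext} with $B=L_{\ve e}^{2,\infty}$ (resp.\ $L_{\ve e}^{\infty,2}$) yields $L_{\ve e}^{2,\infty}$ (resp.\ $L_{\ve e}^{\infty,2}$) control of $P_k u_2$ in terms of $\|P_k F\|_{X_+^{0,-1/2,1}}$. The final piece is the norm comparison $\|P_k F\|_{X_+^{0,-1/2,1}}\lesssim 2^{-k/2}\|F\|_{L_{\ve e}^{1,2}}$, which follows from duality with the smoothing estimate proved in Step~2 (applied to the test function in $X^{0,1/2,\infty}$, combined with H\"older in $L_{\ve e}^{\infty,2}$--$L_{\ve e}^{1,2}$). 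Alternatively one can run the classical $TT^*$-plus-Christ--Kiselev argument directly on the dissipative Duhamel integral, using the uniform homogeneous bounds from Steps~1--2 as input, with Christ--Kiselev producing the $\int_0^t$ truncation from an $\int_\R$ estimate on the restricted bilinear operator.
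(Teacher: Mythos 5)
Your homogeneous analysis (Steps 1 and 2) is sound. Step 1, using positivity of the heat kernel, the tensor factorization of the Gaussian, and the one-dimensional Hardy--Littlewood maximal function, is essentially the paper's argument written out more carefully. Step 2 is actually a nice self-contained alternative: the paper instead quotes Propositions 2.5 and 2.7 of \cite{HWG} for the $L_{\ve e}^{\infty,2}$ bound, whereas you compute the Lorentzian $\F_t\tilde u_1 = 2\e|\xi|^2\hat u_0(\xi)/((\tau+|\xi|^2)^2+\e^2|\xi|^4)$, sum the resulting tent $\min(2^j/\mu,\ \mu/2^j)$ over modulation dyads to get a uniform $X^{0,1/2,1}$ bound for the even reflection, and invoke Lemma~\ref{lem:ext}. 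That route would cleanly reprove the homogeneous half of both inequalities without the external reference.

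The genuine gap is Step 3, and it is the heart of the lemma. Your primary route requires $\|P_k F\|_{X_+^{0,-1/2,1}}\lesssim 2^{-k/2}\|F\|_{L_{\ve e}^{1,2}}$, and you propose to get it by duality against $X^{0,1/2,\infty}$. But the local smoothing estimate one gets from Lemma~\ref{lem:ext} is $X^{0,1/2,1}\to L_{\ve e}^{\infty,2}$, not $X^{0,1/2,\infty}\to L_{\ve e}^{\infty,2}$; duality therefore only yields the weaker $X^{0,-1/2,\infty}$ bound. In fact the claimed $X^{0,-1/2,1}$ bound is false: for $\hat F$ supported near $|\xi|\sim 2^k$, $\xi\cdot\ve e\sim 2^k$, $|\tau+|\xi|^2|\sim 2^j$ (with $j<2k$), the constraint confines $\xi\cdot\ve e$ to an interval of length $\sim 2^{j-k}$, so Bernstein gives the sharp $\|Q_j F\|_{L^2}\sim 2^{(j-k)/2}\|Q_jF\|_{L_{\ve e}^{1,2}}$; hence $2^{-j/2}\|Q_jF\|_{L^2}\sim 2^{-k/2}\|Q_jF\|_{L_{\ve e}^{1,2}}$ for \emph{every} $j$, and the $\ell^1_j$ sum defining $X^{0,-1/2,1}$ diverges. (This logarithmic pile-up is precisely why the paper's $N_k$ norm keeps $L_{\ve e}^{1,2}$ and $X_+^{0,-1/2,1}$ as separate components rather than embedding one into the other.) Your fallback, $TT^*$ plus Christ--Kiselev, is also blocked: the dissipative flow $e^{t(i+\e)\Delta}$ is a semigroup, not a group, so $e^{-s(i+\e)\Delta}$ is unbounded for $s>0$ and neither the untruncated $\int_\R$ operator nor the $T(t)T^*(s)$ factorization of the Duhamel kernel makes sense (one gets $e^{(t-s)i\Delta}e^{(t+s)\e\Delta}$, not $e^{(t-s)(i+\e)\Delta}$). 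The paper's Step~2 for the inhomogeneous part is a direct analysis of the Fourier multiplier $(\tau+|\xi|^2+i\e|\xi|^2)^{-1}$: split according to whether $-\tau-|\bar\xi|^2\sim 1$ and whether $|\tau+|\xi|^2|\lesssim\e$ or $\gg\e$, and for the latter region expand $(\tau+|\xi|^2+i\e|\xi|^2)^{-1}$ as a geometric series in $\e|\xi|^2/(\tau+|\xi|^2)$, treating each term via a change of variables $\eta_1=\sqrt{-\tau-|\bar\xi|^2}$ that reduces matters to the $\e=0$ maximal/smoothing estimates. That decomposition, not a duality argument, is what absorbs the dissipation uniformly and is what you would need to supply.
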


\begin{proof}
If $\e=0$, then the inequalities were proved in \cite{IK2}. By the
scaling and rotational invariance, we may assume $k=0$ and $\ve
e=(1,0,\cdots,0)$.  Then the second inequality follows from Proposition 2.5,
2.7 in \cite{HWG}.  We prove the first inequality by the following two steps.

{\bf Step 1:} $F=0$.

From the fact
\begin{align*}
|e^{\e t\Delta}f(\cdot,t)(x)|\leq& (\e t)^{-n/2}\int
e^{-\frac{|x-y|^2}{2\e t}}|f(y,t)|dy\\
\les& (\e t)^{-n/2}\int e^{-\frac{|x-y|^2}{2\e
t}}\norm{f(y,t)}_{L_t^\infty}dy
\end{align*}
we get
\begin{align*}
\norm{e^{it\Delta+\e
t\Delta}P_0u_0}_{L_{x_1}^2L_{\bar{x}_1,t}^{\infty}} \les
\norm{e^{it\Delta}P_0u_0}_{L_{x_1}^2L_{\bar{x}_1,t}^{\infty}}\les
\norm{u_0}_2.
\end{align*}

{\bf Step 2:} $u_0=0$.

Decompose $P_0u=U_1+\cdots U_n$ such that $\ft_xU_i$ is supported in
$\{|\xi|\sim 1: |\xi_i|\sim 1\}\times \R$. Thus it suffices to show
\begin{align}
\norm{U_i}_{L_{x_1}^2L_{\bar{x},t}^{\infty}}\les \norm{F}_{L_{\ve
e_i}^{1,2}}.
\end{align}
We only show the estimate for $U_1$. We still write $u=U_1$. We
assume $\ft_xF$ is supported in $\{|\xi|\sim 1: \xi_1\sim 1\}\times
\R$. We have
\begin{align*}
u(t,x)=&\int_{\R^{n+1}}\frac{e^{it\tau}e^{ix\xi}}{\tau+|\xi|^2+i\e
|\xi|^2}\widehat{F}(\xi,\tau)d\xi
d\tau\\
=&\int_{\R^{n+1}}\frac{e^{it\tau}e^{ix\xi}}{\tau+|\xi|^2+i\e
|\xi|^2}\widehat{F}(\xi,\tau)(1_{\{-\tau-|\bar \xi|^2\sim
1\}^c}+1_{-\tau-|\bar\xi|^2\sim 1, |\tau+|\xi|^2|\les \e}\\
&+1_{-\tau-|\bar\xi|^2\sim 1, |\tau+|\xi|^2|\gg \e})d\xi d\tau\\
:=&u_1+u_2+u_3.
\end{align*}
For $u_1$, we simply use the Plancherel equality and get
\[\norm{\Delta u_1}_{L^2}+\norm{\partial_t u_1}_2\leq \norm{F}_2,\]
and thus by Sobolev embedding and Bernstein's inequality we obtain
the desired estimate. For $u_2$, using the Lemma \ref{lem:ext}, Lemma \ref{lem:linearStr} and
Bernstein's inequality we get
\begin{align*}
\norm{u_2}_{L_{x_1}^2L_{\bar x_1,t}^{\infty}}\les \norm{u_2}_{\dot
X^{0,1/2,1}}\les \e^{-1/2}\norm{\widehat F}_{L^2}\les
\norm{F}_{L_{\ve e_1}^{1,2}}.
\end{align*}
Now we estimate $u_3$. Since $|\tau+|\xi|^2|\gg \e$, we have
\begin{align*}
\frac{1}{\tau+|\xi|^2+i\e
|\xi|^2}=\frac{1}{\tau+|\xi|^2}+\sum_{k=1}^\infty \frac{(-i\e
|\xi|^2)^k}{(\tau+|\xi|^2)^{k+1}}.
\end{align*}
Moreover, let $s=(-\tau-|\bar \xi|^2)^{1/2}$. Then
$\tau+|\xi|^2=-(s-\xi_1)(s+\xi_1)$, and thus we get $|s-\xi_1|\gg \e,
|s+\xi_1|\sim 1$
\[(\tau+|\xi|^2)^{-1}=-\frac{1}{2s}(\frac{1}{s-\xi_1}+\frac{1}{s+\xi_1})=-\frac{1}{2s(s-\xi_1)}(1+\frac{s-\xi_1}{s+\xi_1}).\]
Hence
\begin{align*}
\frac{1}{\tau+|\xi|^2+i\e
|\xi|^2}=&\frac{1}{\tau+|\xi|^2}+\sum_{k=1}^\infty \frac{(-i\e
|\xi|^2)^k}{(2s(\xi_1-s))^{k+1}}\\
&+\sum_{k=1}^\infty \frac{(-i\e
|\xi|^2)^k}{(2s(\xi_1-s))^{k+1}}[(1+\frac{s-\xi_1}{s+\xi_1})^{k+1}-1]\\
:=&a_1(\xi,\tau)+a_2(\xi,\tau)+a_3(\xi,\tau).
\end{align*}
Inserting this identity into the expression of $u_3$, then we have
$u_3=u_3^1+u_3^2+u_3^3$, where
\[u_3^j=\int_{\R^{n+1}}{e^{it\tau}e^{ix\xi}}a_j(\xi,\tau)\widehat{F}(\xi,\tau)1_{-\tau-|\bar\xi|^2\sim 1, |\tau+|\xi|^2|\gg \e}d\xi d\tau, \quad j=1,2,3.\]
For $u_3^1$, this corresponds to the case $\e=0$ which is proved in
\cite{IK2}. For $u_3^3$, we can control it similarly as $u_1$, since
\[|a_3(\xi,\tau)|\les \sum_{k=1}^\infty \frac{\e^k
|\xi|^{2k}}{(2|s(s-\xi_1)|)^{k+1}}(k+1)\frac{|s-\xi_1|}{|s+\xi_1|}\les
1.\]

It remains to control $u_3^2$. Let $G_k(x_1,\bar
\xi,\tau)=\ft_{x_1}^{-1}1_{-\tau-|\bar \xi|^2\sim 1}1_{|\xi|\sim
1}|\xi|^{2k}\widehat{F}$. Note that \[\norm{G_k}_{L_{\ve
e_1}^{1,2}}\les c^k\norm{F}_{L_{\ve e_1}^{1,2}}.\] Then
\begin{align*}
u_3^2=&\sum_{k=1}^\infty (-i\e)^k\int_\R \int_{\R^{n+1}}
\frac{e^{it\tau}e^{ix\xi}1_{|s-\xi_1|\gg \e}}{(2s(\xi_1-s))^{k+1}}[e^{-iy_1\xi_1}G_k(y_1,\bar \xi,\tau)]d\xi d\tau dy_1\\
=&\sum_{k=1}^\infty (-i\e)^k\int_\R T^k_{y_1}(G(y_1,\cdot))(t,x)
dy_1
\end{align*}
where \[T^k_{y_1}(f)(t,x)=\int_{\R^{n+1}}
\frac{e^{it\tau}e^{ix\xi}1_{|s-\xi_1|\gg
\e}}{(2s(\xi_1-s))^{k+1}}[e^{-iy_1\xi_1}f(y_1,\bar \xi,\tau)]d\xi
d\tau.\] We have
\begin{align*}
T^k_{y_1}(f)(t,x)=&\int_{\R^{n}} \int_\R
\frac{e^{i(x_1-y_1)\xi_1}1_{|s-\xi_1|\gg \e}}{(\xi_1-s)^{k+1}}
d\xi_1\cdot (2s)^{-k-1}{e^{it\tau}e^{i\bar x\cdot
\bar\xi}}[f(y_1,\bar \xi,\tau)]d\bar\xi d\tau\\
=&\int_\R \frac{e^{i(x_1-y_1)\xi_1}1_{|\xi_1|\gg \e}}{(\xi_1)^{k+1}}
d\xi_1\cdot \int_{\R^{n}} e^{i(x_1-y_1)s}
(2s)^{-k-1}{e^{it\tau}e^{i\bar x\cdot \bar\xi}}[f(y_1,\bar
\xi,\tau)]d\bar\xi d\tau.
\end{align*}
Then we get
\[|T^k_{y_1}(f)(t,x)|\les M^{-k}\e^{-k}\bigg|\int_{\R^{n}} e^{i(x_1-y_1)s}\cdot
(2s)^{-k-1}{e^{it\tau}e^{i\bar x\cdot \bar\xi}}[f(y_1,\bar
\xi,\tau)]d\bar\xi d\tau\bigg|.\] Making a change of variable
$\eta_1=s=\sqrt{-\tau-|\bar\xi|^2}$, $d\tau=-2\eta_1 d\eta_1$, we
obtain
\begin{align*}
&\int_{\R^{n}} e^{i(x_1-y_1)s}\cdot (2s)^{-k-1}{e^{it\tau}e^{i\bar
x\cdot \bar\xi}}[f(y_1,\bar \xi,\tau)]d\bar\xi d\tau\\
=&\int_{\R^{n}} e^{i(x_1-y_1)\eta_1}\cdot
(2\eta_1)^{-k}{e^{it(\eta_1^2+|\bar \xi|^2)}e^{i\bar x\cdot
\bar\xi}}[f(y_1,\bar \xi,\eta_1^2+|\bar \xi|^2)]d\bar\xi d\tau.
\end{align*}
Thus, by the linear estimate (see Lemma \ref{lem:linearStr}) we get
\begin{align*}
\norm{T^k_{y_1}(f)}_{_{L_{x_1}^2L_{\bar x,t}^{\infty}}}\les
M^{-k}\e^{-k} \norm{f}_2,
\end{align*}
which suffices to give the estimate for $u_3^2$. We complete the
proof of the lemma.
\end{proof}

\begin{lem} Let $n\geq 3$, $k\in \Z,\e\geq 0$. Assume $u,F$ solves the equation
\[u_t-i\Delta u-\e \Delta u=F(x,t), \quad u(x,0)=u_0.\]
Then for any $\ve e\in \cir^{n-1}$ we have
\begin{align}
\norm{P_ku}_{L_t^\infty L_x^2\cap L_t^2L_x^{\frac{2n}{n-2}}}\les
\norm{u_0}_{L^2}+\norm{F}_{N_k},
\end{align}
where the implicit constant is independent of $\e$.
\end{lem}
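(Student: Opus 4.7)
The plan is to split the solution into the homogeneous contribution from $u_0$ and the Duhamel contribution from $F$, and then decompose $F$ along the infimum defining $\|F\|_{N_k}$, handling each piece separately. For the homogeneous piece $e^{(i+\e)t\Delta}P_ku_0$, I factor the evolution as $e^{\e t\Delta}\circ e^{it\Delta}$ and use that for $t\geq 0$ the heat semigroup is convolution with an $L^1$-normalised Gaussian kernel, hence a contraction on every $L^p_x$; this transfers the pure Schr\"odinger Strichartz bounds of Lemma \ref{lem:linearStr} to the dissipative semigroup with constants independent of $\e$, yielding the target $L_t^\infty L_x^2\cap L_t^2L_x^{\frac{2n}{n-2}}$ control by $\|u_0\|_{L^2}$.

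Fix a near-optimal decomposition $F=f_1+f_2+f_3$ for the infimum in $\|F\|_{N_k}$. For $f_1\in L_t^1L_x^2$, Minkowski's inequality combined with the uniform (in $\e$) homogeneous Strichartz estimates directly produces both the $L_t^\infty L_x^2$ and $L_t^2L_x^{\frac{2n}{n-2}}$ bounds on the retarded Duhamel integral. For $f_2$, I run a $TT^*$ argument based on the local smoothing estimate of Lemma \ref{lem:max} (take $F=0$ there) together with the directional decomposition \eqref{eq:Pkdec}; the key observation for uniformity in $\e$ is that the composition arising in the $TT^*$ analysis factors as
\[
e^{(-i+\e)(T-s)\Delta}\,e^{(i+\e)(T-s')\Delta}=e^{i(s-s')\Delta}\circ e^{\e(2T-s-s')\Delta},
\]
whose dissipative factor is an $L^p$-contraction whenever $s,s'\leq T$, so the $2^{-k/2}$ gain from the pure Schr\"odinger local smoothing transfers without loss. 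For $f_3\in X_+^{0,-1/2,1}$, the first inequality of Lemma \ref{lem:linearXsb} with $b=1$ gives $\|P_ku_3\|_{X_+^{0,1/2,1}}\lesssim \|f_3\|_{X_+^{0,-1/2,1}}$; then Lemma \ref{lem:ext} with target norm $B=L_t^\infty L_x^2\cap L_t^2L_x^{\frac{2n}{n-2}}$ (whose hypothesis holds by Lemma \ref{lem:linearStr} with $C(k)\lesssim 1$) completes the bound.

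The main obstacle is the additive $2^{-k}\|F\|_{L^2_{t,x}}$ piece of $\|F\|_{N_k}$, for which a direct energy argument produces only $\|u\|_{L_t^\infty L_x^2}\lesssim \e^{-1/2}2^{-k}\|F\|_{L^2_{t,x}}$, a constant that blows up as $\e\to 0$. I would handle this with a further modulation split $F=F_{\mathrm{hi}}+F_{\mathrm{lo}}$ at the threshold $|\tau+|\xi|^2|\sim 2^{2k}$: the high-modulation piece satisfies $\|F_{\mathrm{hi}}\|_{X_+^{0,-1/2,1}}\lesssim 2^{-k}\|F\|_{L^2_{t,x}}$ by Cauchy--Schwarz on the dyadic modulation sum over $j\geq 2k$, reducing it to the $f_3$ estimate above. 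For the low-modulation remainder, the dissipation is genuinely needed: combining the $Z_k$-bound of the second inequality of Lemma \ref{lem:linearXsb} with the identity $L_0u=F+\e\Delta u$ lets one trade the $\e 2^{2k}$ factor coming from $\e\Delta u$ against the dissipative gain in $\|u\|_{L^2_{t,x}}$ from the energy inequality, producing a uniform-in-$\e$ Schr\"odinger-type source that can be fed back through Lemma \ref{lem:ext} to close the estimate.
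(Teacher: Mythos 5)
Your homogeneous estimate, the $f_1\in L_t^1L_x^2$ contribution, and the $f_3\in X_+^{0,-1/2,1}$ contribution are handled correctly and in essentially the same way as the paper (the paper invokes Keel--Tao rather than your heat-contraction factorisation for the free evolution, an immaterial difference).

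There are two genuine problems. First, for the $f_2\in L_{\ve e}^{1,2}$ contribution, the $TT^*$ route does not apply cleanly because the retarded Duhamel kernel $e^{(i+\e)(t-s)\Delta}$ does \emph{not} factor as $T(T')^*$ for the dissipative semigroup: one has $e^{(i+\e)t\Delta}\circ\bigl(e^{(i+\e)s\Delta}\bigr)^*=e^{i(t-s)\Delta}e^{\e(t+s)\Delta}$, whose heat exponent is $t+s$ rather than $t-s$, and the discrepancy $e^{2\e s\Delta}$ can be absorbed in neither direction. Your displayed identity is the correct composition only for the pairing $\|u(T)\|_{L^2_x}^2$ at a \emph{fixed} time $T$; even there, the claim that the heat factor is an ``$L^p$-contraction'' that transfers the $2^{-k/2}$ gain needs more than pointwise Young's inequality, because the parameter $2T-s-s'$ depends on the time variable that sits inside the $L^2_t$ part of the $L_{\ve e}^{1,2}$ norm, and $\sup_{\tau\geq 0}G_\tau(z)\sim|z|^{-1}$ is not integrable, so the naive Minkowski step fails. (The $L^\infty_tL^2_x$ bound can in fact be closed by dualising $\langle u(T),u_0\rangle$ against the conjugate dissipative semigroup's homogeneous local smoothing, but that is not what you wrote.) More seriously, you give no mechanism at all for the $L^2_tL_x^{2n/(n-2)}$ component: the usual Christ--Kiselev transfer requires factoring the Duhamel kernel through the free propagator and an unretarded integral over the whole line, and here the unretarded kernel $e^{(i+\e)(t-s)\Delta}$ blows up for $s>t$. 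The paper sidesteps all of this by repeating the Fourier-multiplier and resolvent-expansion argument of Step~2 of Lemma~\ref{lem:max} --- splitting $\frac{1}{\tau+|\xi|^2+i\e|\xi|^2}$ according to the size of $\tau+|\xi|^2$ relative to $\e$ and expanding the low-modulation piece in powers of $\e$ --- which delivers both Strichartz components at once, uniformly in $\e$.

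Second, the ``main obstacle'' of your final paragraph does not exist: in the definition of $N_k$, the term $2^{-k}\|f\|_{L^2_{t,x}}$ is \emph{added to} the infimum over decompositions $f=f_1+f_2+f_3$, not included inside it as a fourth option. Thus $\|F\|_{N_k}$ is at least as large as the infimum, and it suffices to bound $\|P_ku\|$ by $\|u_0\|_{L^2}$ plus that infimum; the $L^2_{t,x}$ piece is superfluous for this lemma (it is there to feed the separate $Z_k$ estimate of Lemma~\ref{lem:linearXsb}). Your modulation split and the $\e\Delta u$ trading are therefore unnecessary.
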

\begin{proof}
Since we have for $t>0$
\begin{align*}
\norm{e^{it\Delta+\e t\Delta}u_0}_{L_x^\infty}\les&
t^{-n/2}\norm{u_0}_{L_x^1}\\
\norm{e^{it\Delta+\e t\Delta}u_0}_{L_x^2}\les& \norm{u_0}_{L_x^2}
\end{align*}
where the implicit constant is independent of $\e$, then by the
abstract framework of Keel-Tao \cite{KT} we get the Strichartz
estimates
\begin{align*}
\norm{P_ku}_{L_t^\infty L_x^2\cap L_t^2L_x^{\frac{2n}{n-2}}}\les
\norm{u_0}_{L^2}+\norm{F}_{L_t^1L_x^2},
\end{align*}
with the implicit constant independent of $\e$.

By the same argument as in Step 2 of the proof of Lemma
\ref{lem:max}, we get
\begin{align*}
\norm{P_ku}_{L_t^\infty L_x^2\cap L_t^2L_x^{\frac{2n}{n-2}}}\les
\norm{u_0}_{L^2}+2^{-k/2}\sup_{\ve e\in \cir^{n-1}}\norm{F}_{L_{\ve
e}^{1,2}}.
\end{align*}
On the other hand, by Lemma \ref{lem:ext}, Lemma \ref{lem:linearStr} and Lemma \ref{lem:linearXsb}, we get
\begin{align*}
\norm{P_ku}_{L_t^\infty L_x^2\cap L_t^2L_x^{\frac{2n}{n-2}}}\les
\norm{P_ku}_{X_+^{0,1/2,1}}\les
\norm{u_0}_{L^2}+\norm{F}_{X_+^{0,-1/2,1}}.
\end{align*}
Thus we complete the proof.
\end{proof}

Gathering the above lemmas, we can get the following linear estimates:

\begin{lem}[Linear estimates]\label{lem:linear}
Assume $n\geq 3$, $u,F$ solves the equation: for $\e>0$
\[u_t-i\Delta u-\e \Delta u=F(x,t), \quad u(x,0)=u_0.\]
Then for $s\in \R$
\begin{align}
\norm{u}_{F^s\cap Z^s}\les&
\norm{u_0}_{\dot B^s_{2,1}}+\norm{F}_{N^s},
\end{align}
where the implicit constant is independent of $\e$.
\end{lem}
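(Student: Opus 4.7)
The plan is to assemble the preceding linear lemmas via a Littlewood--Paley reduction together with the decomposition of $F$ built into the definition of $N_k$. By the Besov definitions of $F^s$, $Z^s$, $N^s$ it suffices to establish the dyadic estimate
\[\norm{P_k u}_{F_k} + \norm{P_k u}_{Z_k} \lesssim \norm{P_k u_0}_{L^2} + \norm{P_k F}_{N_k}\]
uniformly in $k\in \Z$ and $\e\in (0,1]$, and then weight by $2^{ks}$ and sum. Since the projectors commute with $\p_t - i\Delta - \e \Delta$, $P_k u$ solves the corresponding equation with data $P_k u_0$ and forcing $P_k F$. The $Z_k$ component is then immediate from Lemma \ref{lem:linearXsb}, as $\e^{1/2}\le 1$ and $2^{-k}\norm{P_kF}_{L^2}$ is one of the pieces of $\norm{P_kF}_{N_k}$. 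The Strichartz output $\norm{P_k u}_{L_t^\infty L_x^2 \cap L_t^2 L_x^{2n/(n-2)}}$ in $F_k$ is precisely the content of the unlabelled lemma immediately preceding the statement.

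It therefore remains to control the $X_+^{0,1/2,\infty}$ piece together with the two directional norms (the $L_{\ve e}^{2,\infty}$ maximal function and the $L_{\ve e}^{\infty,2}$ local smoothing) by the right-hand side. I fix a near-optimal decomposition $P_k F = f_1+f_2+f_3$ realizing the infimum in $\norm{P_kF}_{N_k}$ and split $u = u_h + u_1 + u_2 + u_3$ accordingly via Duhamel, with $u_h = e^{it\Delta+\e t\Delta}P_k u_0$. For the $X_+^{0,1/2,\infty}$ output I invoke Lemma \ref{lem:linearXsb} with $b=\infty$, reducing matters to the estimates $\norm{f_j}_{X_+^{0,-1/2,\infty}}\lesssim$ (corresponding $N_k$ piece): this is trivial for $f_3$ by the embedding $\ell^1\hookrightarrow \ell^\infty$, and for $f_1, f_2$ it follows from the Bernstein-in-time inequality $\norm{Q_j g}_{L^2_{t,x}}\lesssim 2^{j/2}\norm{g}_{L^1_t L^2_x}$ and its directional analog, exploiting the spatial frequency localization $|\xi|\sim 2^k$. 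For the maximal function and local smoothing outputs I argue separately by source: on $f_2$ these are exactly Lemma \ref{lem:max}; on $f_3$ they follow by composing Lemma \ref{lem:linearXsb} (with $b=1$) with Lemma \ref{lem:ext}, whose propagator hypothesis is supplied by Lemma \ref{lem:linearStr}; on $f_1$ they follow from Minkowski's inequality applied to the Duhamel integral together with the homogeneous bounds of Step 1 of Lemma \ref{lem:max} and its local smoothing counterpart, both of which are already $\e$-uniform. The homogeneous contribution $u_h$ is handled analogously by invoking the same set of lemmas with $F = 0$.

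Taking the infimum over decompositions of $P_kF$, summing the homogeneous and inhomogeneous bounds, and finally weighting by $2^{ks}$ and summing in $k$ yields the claim. The substantive technical difficulty --- the non-symmetry of the equation in time and the incompatibility of the dissipative factor $e^{\e t \Delta}$ with the $X^{s,b}$ structure, especially in the inhomogeneous maximal function estimate --- has already been absorbed into the proof of Lemma \ref{lem:max}; the present statement is therefore an assembly exercise whose main care is simply confirming that every constant invoked is independent of $\e\in (0,1]$.
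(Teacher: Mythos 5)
Your argument is correct and is precisely the assembly the paper intends: the paper offers no proof beyond the phrase ``Gathering the above lemmas,'' and your Littlewood--Paley reduction, Duhamel split against the $N_k$ decomposition, and source-by-source invocation of Lemmas \ref{lem:linearStr}--\ref{lem:max} and Lemma \ref{lem:ext} is exactly that. The only glue you add that the paper leaves implicit --- the embeddings $\|f\|_{X^{0,-1/2,\infty}}\lesssim \|f\|_{L^1_tL^2_x}$ and, via $P_k=\sum_j P_{k,\ve e_j}\Theta_k^j$ and a Bernstein count of the $\xi\cdot\ve e$-support for fixed $(\bar\xi,\tau)$, $\|P_kf\|_{X^{0,-1/2,\infty}}\lesssim 2^{-k/2}\sup_{\ve e}\|f\|_{L^{1,2}_{\ve e}}$ --- are both valid and are indeed what is needed to feed the $N_k$ pieces into Lemma \ref{lem:linearXsb} with $b=\infty$.
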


\section{Nonlinear estimates}

In this section we prove some nonlinear estimates.
The nonlinear term in the Landau-Lifshitz equation is
\[G(u)=\frac{\bar u}{1+|u|^2}\sum_{j=1}^n(\p_{x_j}u)^2.\]
By Taylor's expansion, if $\norm{u}_\infty <1$ we have
\[G(u)=\sum_{k=0}^\infty \bar u(-1)^k|u|^{2k}\sum_{j=1}^n(\p_{x_j}u)^2.\]
So we will need to do multilinear estimates.

\begin{lem}
(1) If $j\geq 2k-100$ and $X$ is a space-time translation invariant
Banach space, then $Q_{\leq j}P_k$ is bounded on $X$ with bound
independent of $j,k$.

(2) For any $j,k$, $Q_{\leq j}P_{k,\ve e}$ is bounded on $L_{\ve
e}^{p, 2}$ and $Q_{\leq j}$ is bounded on $L_t^pL_x^2$ for $1\leq p
\leq \infty$, with bound independent of $j,k$.
\end{lem}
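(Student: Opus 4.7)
The plan is to compute the Fourier multiplier kernel of each operator and reduce each claim to a kernel-level $L^1$ estimate, which transfers to the target space via Young/Minkowski. In both parts the starting point is the factorization
\[\F^{-1}_{\xi,\tau}[\chi_{\leq j}(\tau+|\xi|^2)](x,t)=K_1(t)\,G(x,t),\]
obtained by the substitution $\sigma=\tau+|\xi|^2$, where $K_1(t)=2^j\phi(2^jt)$ for some Schwartz $\phi$ and $G(x,t)=\F^{-1}_\xi[e^{-it|\xi|^2}]$ is the free Schr\"odinger kernel. Consequently, for any spatial multiplier $P$ with kernel $\varphi$,
\[(Q_{\leq j}Pf)(x,t)=\int K_1(t-s)\,e^{i(t-s)\Delta}(\varphi\ast_x f(\cdot,s))(x)\,ds.\]

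For part (1) I would take $\varphi=\psi_k:=\F^{-1}\chi_k$, so that the space-time convolution kernel of $Q_{\leq j}P_k$ is $K(x,t)=K_1(t)[e^{it\Delta}\psi_k](x)$. A rescaling combined with the standard dispersive/stationary-phase bound yields $\|e^{it\Delta}\psi_k\|_{L^1_x}\lesssim(1+2^{2k}|t|)^{n/2}$; substituting $u=2^jt$ gives
\[\|K\|_{L^1_{x,t}}\lesssim\int|\phi(u)|(1+2^{2k-j}|u|)^{n/2}\,du\lesssim 1,\]
because the hypothesis $j\geq 2k-100$ caps $2^{2k-j}\leq 2^{100}$ and $\phi$ is Schwartz. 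Translation invariance of $X$ then delivers $\|Q_{\leq j}P_kf\|_X\lesssim\|f\|_X$ by Minkowski's inequality, uniformly in $j,k$. The first half of part (2)---the $L^p_tL^2_x$ bound on $Q_{\leq j}$---follows immediately from the displayed Duhamel formula with $\varphi=\delta$: since $e^{i(t-s)\Delta}$ is an $L^2_x$-isometry, Minkowski in $L^2_x$ and Young in $L^p_t$ against $K_1\in L^1_t$ close it for every $p\in[1,\infty]$.

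For the $L^{p,2}_{\ve e}$ bound on $Q_{\leq j}P_{k,\ve e}$ (taking $\ve e=\ve e_1$ without loss of generality), I would take a partial Fourier transform in $(\bar x,t)$, which converts the operator into an $(\bar\xi,\tau)$-parametrized family of 1D convolutions in $x_1$ with kernel $K_{\bar\xi,\tau}(z_1)=\F^{-1}_{\xi_1}m_{\bar\xi,\tau}(z_1)$, where
\[m_{\bar\xi,\tau}(\xi_1)=\chi_{\leq j}(\tau+\xi_1^2+|\bar\xi|^2)\wt\chi_k(|\xi_1|)\chi_k(|\xi|).\]
Splitting the cases $j\geq 2k$ versus $j<2k$, a direct derivative analysis shows $m_{\bar\xi,\tau}$ is a smooth bump in $\xi_1$ of width $L=\min(2^k,2^{j-k})$ centered near $\xi_1=\pm\sqrt{-(\tau+|\bar\xi|^2)}$ with $|\p_{\xi_1}^\ell m_{\bar\xi,\tau}|\lesssim L^{-\ell}$, yielding the pointwise bound $|K_{\bar\xi,\tau}(z_1)|\lesssim L(1+L|z_1|)^{-N}$ \emph{uniformly} in $(\bar\xi,\tau)$; the dependence on $\alpha=\tau+|\bar\xi|^2$ only shifts the bump and so appears purely as a modulation in $z_1$. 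Hence $\Phi(z_1):=\sup_{\bar\xi,\tau}|K_{\bar\xi,\tau}(z_1)|$ lies in $L^1_{z_1}$ with norm $\lesssim 1$, and combining the partial Plancherel identity $\|f\|_{L^p_{x_1}L^2_{\bar x,t}}=\|\tilde f\|_{L^p_{x_1}L^2_{\bar\xi,\tau}}$ with Minkowski in $L^2_{\bar\xi,\tau}$ and Young in $L^p_{x_1}$ against $\Phi$ closes the estimate for all $p\in[1,\infty]$. The main obstacle is this last vector-valued step: the naive bound $\|K_{\bar\xi,\tau}\|_{L^1_{z_1}}\lesssim 1$ alone suffices only at $p=2$, while $p\neq 2$ forces us to control the pointwise supremum $\Phi$; the delicate case is $j<2k$, where the bump center genuinely moves with $(\bar\xi,\tau)$, and the rescue is the $\alpha$-independence of the bump's width and smoothness scale.
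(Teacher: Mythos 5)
The paper does not contain its own proof of this lemma; it simply cites Lemma~5.4 of \cite{Guo}, so a direct comparison is not possible. Your self-contained kernel argument is correct. A few checks on the key claims: in part~(1), the factorization $K(x,t)=K_1(t)\,[e^{it\Delta}\psi_k](x)$ with $K_1(t)=2^j\phi(2^jt)$ is exactly right (substitute $\sigma=\tau+|\xi|^2$), and $\|e^{it\Delta}\psi_k\|_{L^1_x}\lesssim(1+2^{2k}|t|)^{n/2}$ follows from stationary phase on the annulus ($L^\infty$ decay $(1+|s|)^{-n/2}$ against concentration on a ball of radius $\sim 1+|s|$, with $s=2^{2k}t$); the hypothesis $j\geq 2k-100$ is used precisely to cap $2^{2k-j}\leq 2^{100}$ so that $\|K\|_{L^1_{x,t}}\lesssim 1$, and Minkowski's integral inequality on the translation-invariant $X$ finishes. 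The $L^p_tL^2_x$ bound on $Q_{\leq j}$ via the one-dimensional temporal convolution against $K_1$ composed with the unitary $e^{i(t-s)\Delta}$ is clean. For the $L^{p,2}_{\ve e}$ bound, the partial Fourier transform in $(\bar x,t)$ and the derivative analysis of $m_{\bar\xi,\tau}$ are the crux; I verified that the chain-rule cross terms (those picking up $h''=2$ in $\partial_{\xi_1}^\ell\chi_{\leq j}(\alpha+\xi_1^2)$) are dominated by the principal contribution $2^{\ell(k-j)}$ when $j<2k$ and by $2^{-\ell k}$ when $j\geq 2k$, so the stated $|\partial^\ell_{\xi_1}m_{\bar\xi,\tau}|\lesssim L^{-\ell}$ with $L=\min(2^k,2^{j-k})$ holds uniformly in $(\bar\xi,\tau)$, and integration by parts gives the uniform kernel bound $L(1+L|z_1|)^{-N}$. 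Your remark that the $\alpha$-dependence ``appears purely as a modulation in $z_1$'' is a slight overstatement---the truncations $\wt\chi_k(|\xi_1|)\chi_k(|\xi|)$ reshape the bump as $\alpha$ varies, not merely translate it---but this is harmless: only the uniform support and derivative bounds enter, so $\Phi(z_1)=\sup_{\bar\xi,\tau}|K_{\bar\xi,\tau}(z_1)|\in L^1_{z_1}$ and the vector-valued Minkowski/Young step closes the estimate for all $p\in[1,\infty]$.
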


\begin{proof}
See the proof of Lemma 5.4 in \cite{Guo}.
\end{proof}

\begin{lem}\label{lem:L2est}
Assume $n\geq 3$, $k_1,k_2,k_3\in \Z$. Then
\begin{align}
\norm{P_{k_1}uP_{k_2}v}_{L_{t,x}^{2}}\les&
2^{(n-1)k_1/2}2^{-k_2/2}\norm{P_{k_1}u}_{Y_{k_1}+F_{k_1}}\norm{P_{k_2}v}_{F_{k_2}},\\
\norm{P_{k_3}(P_{k_1}uP_{k_2}v)}_{L_{t,x}^{2}}\les&
2^{\frac{(n-2)\min(k_1,k_2,k_3)}{2}}\norm{P_{k_1}u}_{Y_{k_1}}\norm{P_{k_2}v}_{Y_{k_2}}.
\end{align}
\end{lem}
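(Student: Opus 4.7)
The plan is to derive both inequalities from Hölder's inequality in mixed space--time Lebesgue norms together with Bernstein, using the dyadic embeddings built into the definitions of $F_k$ and $Y_k$. The asymmetric weights in the first inequality signal that one should pair a maximal-function estimate on $P_{k_1}u$ against a local-smoothing estimate on $P_{k_2}v$; the second inequality, being symmetric in $u,v$ and only involving $Y_k$-norms, should follow from Strichartz pairs plus Bernstein on the lowest frequency.

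For the first estimate, I would use the decomposition \eqref{eq:Pkdec} to write $P_{k_2}v = \sum_{j=1}^n P_{k_2,\ve e_j}\Theta_{k_2}^j v$ and apply anisotropic Hölder for each $j$:
\[\|P_{k_1}u\cdot P_{k_2,\ve e_j}\Theta_{k_2}^j v\|_{L^2_{t,x}} \leq \|P_{k_1}u\|_{L^{2,\infty}_{\ve e_j}}\,\|P_{k_2,\ve e_j}\Theta_{k_2}^j v\|_{L^{\infty,2}_{\ve e_j}}.\]
Both $Y_{k_1}$ and $F_{k_1}$ contain the weighted maximal-function norm $2^{-(n-1)k_1/2}\|\cdot\|_{L^{2,\infty}_{\ve e}}$, which controls the first factor by $2^{(n-1)k_1/2}\|P_{k_1}u\|_{Y_{k_1}+F_{k_1}}$. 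Since $\Theta_{k_2}^j$ is a Fourier multiplier whose inverse Fourier transform is a Schwartz function at scale $2^{-k_2}$, it commutes with $P_{k_2,\ve e_j}$ and is bounded on $L^{\infty,2}_{\ve e_j}$; hence the $L^{\infty,2}_{\ve e}$ term inside $F_{k_2}$ gives $\|P_{k_2,\ve e_j}\Theta_{k_2}^j v\|_{L^{\infty,2}_{\ve e_j}} \lesssim 2^{-k_2/2}\|P_{k_2}v\|_{F_{k_2}}$. Summing over the $n$ coordinate directions closes the estimate.

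For the second estimate, I would split into subcases by which of $k_1,k_2,k_3$ attains the minimum. Say $k_1 = \min$ (the case $k_2 = \min$ is symmetric in $u$ and $v$); then by Hölder in $x$ and Bernstein,
\[\|P_{k_3}(P_{k_1}u\, P_{k_2}v)\|_{L^2_{t,x}} \leq \|P_{k_1}u\|_{L^\infty_t L^n_x}\|P_{k_2}v\|_{L^2_t L^{2n/(n-2)}_x} \lesssim 2^{(n-2)k_1/2}\|P_{k_1}u\|_{Y_{k_1}}\|P_{k_2}v\|_{Y_{k_2}},\]
using Bernstein $\|P_{k_1}u\|_{L^n_x}\lesssim 2^{(n-2)k_1/2}\|P_{k_1}u\|_{L^2_x}$ and noting that both $L^\infty_t L^2_x$ and $L^2_t L^{2n/(n-2)}_x$ sit inside $Y_k$. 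In the remaining case $k_3 < \min(k_1,k_2) - O(1)$, the frequency support of the product forces $|k_1 - k_2| = O(1)$, so the minimum is $k_3$; I then use Hölder $L^2_t L^{2n/(n-2)}_x \cdot L^\infty_t L^2_x \hookrightarrow L^2_t L^{n/(n-1)}_x$ and pay the factor $2^{(n-2)k_3/2}$ by Bernstein on the outer projection $P_{k_3}$ when returning from $L^{n/(n-1)}_x$ to $L^2_x$. The main technical nuisance is verifying that the anisotropic multiplier $\Theta_{k_2}^j$ in the first estimate preserves the $L^{\infty,2}_{\ve e_j}$ norm, which reduces to a standard $L^1$-kernel bound at scale $2^{-k_2}$; aside from that, the entire lemma is Hölder plus dyadic bookkeeping.
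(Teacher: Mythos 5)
Your proposal is correct and takes essentially the same approach as the paper: for the first estimate, the decomposition \eqref{eq:Pkdec} followed by anisotropic H\"older pairing $L^{2,\infty}_{\ve e}$ on $P_{k_1}u$ against $L^{\infty,2}_{\ve e}$ on $P_{k_2,\ve e}v$, and for the second, a case split on which index attains the minimum, using Strichartz components of $Y_k$ together with Bernstein. The only cosmetic difference is that in the $k_1=\min$ subcase you H\"older through $L^\infty_t L^n_x\cdot L^2_t L^{2n/(n-2)}_x$ whereas the paper uses $L^2_t L^\infty_x\cdot L^\infty_t L^2_x$; both give the same factor $2^{(n-2)k_1/2}$.
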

\begin{proof}
For the first inequality, we have
\begin{align*}
\norm{P_{k_1}uP_{k_2} v}_{L_{t,x}^{2}}
\les& \sum_{j=1}^n\norm{P_{k_1} uP_{k_2,\ve e_j}\Theta_{k_2}^jv}_{L_{t,x}^{2}}\les \sum_{j=1}^n\norm{P_{k_1} u}_{L_{\ve
e_j}^{2,\infty}}\norm{P_{k_2,\ve e_j}v}_{L_{\ve e_j}^{\infty,2}}\\
\les&2^{(n-1)k_1/2}2^{-k_2/2}\norm{P_{k_1}u}_{Y_{k_1}+F_{k_1}}\norm{P_{k_2}v}_{F_{k_2}}.
\end{align*}
For the second inequality, if $k_3\leq \min(k_1,k_2)$, then
\begin{align*}
\norm{P_{k_3}(P_{k_1}uP_{k_2}v)}_{L_{t,x}^{2}}\les&2^{k_3(n-2)/2}\norm{P_{k_1}uP_{k_2}v}_{L_{t}^{2}L_x^{\frac{2n}{2n-2}}}\\
\les&2^{k_3(n-2)/2}\norm{P_{k_1}u}_{L_{t}^{2}L_x^{\frac{2n}{n-2}}}\norm{P_{k_2}v}_{L_{t}^\infty L_x^2}.
\end{align*}
If $k_1\leq \min(k_2,k_3)$, then
\begin{align*}
\norm{P_{k_3}(P_{k_1}uP_{k_2}v)}_{L_{t,x}^{2}}
\les&\norm{P_{k_1}u}_{L_{t}^{2}L_x^\infty}\norm{P_{k_2}v}_{L_{t}^\infty L_x^2}\\
\les&2^{k_1(n-2)/2}\norm{P_{k_1}u}_{L_{t}^{2}L_x^{\frac{2n}{n-2}}}\norm{P_{k_2}v}_{L_{t}^\infty L_x^2}.
\end{align*}
If $k_2\leq \min(k_1,k_3)$, the proof is identical to the above case.
\end{proof}

\begin{lem}[Algebra properties]\label{lem:alg}
If $s\geq n/2$, then we have
\begin{align*}
\norm{uv}_{Y^{s}}\les&
\norm{u}_{Y^{s}}\norm{v}_{Y^{n/2}}+\norm{u}_{Y^{n/2}}\norm{v}_{Y^{s}}.
\end{align*}
\end{lem}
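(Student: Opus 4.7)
The plan is to use a Littlewood-Paley paraproduct decomposition: write $uv = \sum_{k_1, k_2} P_{k_1} u \cdot P_{k_2} v$ and estimate $\norm{P_k(P_{k_1}u\,P_{k_2}v)}_{Y_k}$ in the three standard regimes, Low-High ($k_1 \leq k_2 - 5$, $k_2 \sim k$), High-Low ($k_2 \leq k_1 - 5$, $k_1 \sim k$), and High-High ($k_1, k_2 \geq k - 5$ with $|k_1 - k_2| \leq 5$). By symmetry HL reduces to LH. After summing each piece against the weight $2^{ks}$, the low-frequency factor will be summed in $\dot{B}^{n/2}_{2,1}$ (producing the $\norm{\cdot}_{Y^{n/2}}$ norm), while the high-frequency factor contributes $\norm{\cdot}_{Y^s}$. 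The condition $s \geq n/2$ is used to sum the HH piece where $k_1 = k_2 \geq k - C$.

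For the first three ingredients of the $Y_k$ norm, namely $L^\infty_t L^2_x$, $L^2_t L^{2n/(n-2)}_x$, and $2^{-(n-1)k/2}\sup_{\ve e}\norm{\cdot}_{L_{\ve e}^{2,\infty}}$, the strategy in the LH case is to place the low-frequency factor $P_{k_1}u$ in $L^\infty_{t,x}$ and $P_{k_2}v$ in the relevant norm via Hölder. Bernstein's inequality gives $\norm{P_{k_1}u}_{L^\infty_{t,x}} \lesssim 2^{nk_1/2}\norm{P_{k_1}u}_{L^\infty_t L^2_x}$, so the LH contribution is bounded by $2^{nk_1/2}\norm{P_{k_1}u}_{Y_{k_1}}\norm{P_{k_2}v}_{Y_{k_2}}$, which, after summing in $k_1 \leq k - 5$ and $k_2 \sim k$, yields exactly the targeted product of Besov-type norms. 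For HH we instead apply Lemma \ref{lem:L2est} (second inequality) together with Bernstein to gain $2^{(n-2)\min(k_1,k_2)/2}$, and the geometric summation closes precisely because $s \geq n/2$.

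The hard part is the fourth piece, the $2^{-k}\inf_{f=f_1+f_2}(\norm{Lf_1}_{L^2} + \norm{\bar L f_2}_{L^2})$ term. Taking optimal decompositions $P_{k_1}u = u_1 + u_2$ and $P_{k_2}v = v_1 + v_2$, one must assign the four cross-products $u_i v_j$ to either the $f_1$ or the $f_2$ slot so that the Leibniz identity
\[
L(fg) = Lf \cdot g + f \cdot Lg - 2i\nabla f \cdot \nabla g, \qquad \bar L(fg) = \bar L f \cdot g + f \cdot \bar L g + 2i\nabla f \cdot \nabla g,
\]
produces only controlled pieces; for the mixed $u_1 v_2$ and $u_2 v_1$ terms, the auxiliary identity $L - \bar L = -2i\Delta$ will be required to trade one derivative operator for the other at the price of a factor $2^{2\min(k_1,k_2)}$, which is absorbed by Bernstein on the low-frequency factor. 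The pure terms $Lu_i \cdot v_j$ and $u_i \cdot Lv_j$ are then estimated by putting the low-frequency factor in $L^\infty_{t,x}$ via Bernstein and using that $\norm{Lu_1}_{L^2}$, $\norm{\bar L u_2}_{L^2}$ are controlled by $2^{k_1}\norm{P_{k_1}u}_{Y_{k_1}}$ and similarly for $v$. The gradient cross-term $\nabla f \cdot \nabla g$ is handled by Lemma \ref{lem:L2est} applied to $P_{k_1}(\nabla u) \cdot P_{k_2}(\nabla v)$, where the derivative distribution introduces factors of $2^{k_1}$ and $2^{k_2}$ compatible with the $2^{-k}$ weight once the frequencies are inserted.

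The main obstacle is this last step: the non-symmetry of $L$ and $\bar L$ under conjugation means the Leibniz rule does not preserve the $f_1$/$f_2$ splitting, so one must carefully choose which cross-products enter $f_1$ and which enter $f_2$ and quantitatively account for the correction terms coming from $L - \bar L = -2i\Delta$ in every mixed case. Once this bookkeeping is done, summation over $k_1, k_2$ is routine.
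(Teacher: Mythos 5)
Your overall plan---paraproduct decomposition, H\"older with Bernstein for the Lebesgue pieces, and a separate argument for the $X^{0,1}+\bar X^{0,1}$ piece---matches the paper's architecture, but the way you handle the $X^{0,1}+\bar X^{0,1}$ component is genuinely different from the paper and, as sketched, has a soft spot. The paper does \emph{not} go through the Leibniz identity at all in Lemma~\ref{lem:alg}: it reduces to \eqref{eq:algeXsb}, assigns the output slot ($X^{0,1}$ vs.\ $\bar X^{0,1}$) according to the decomposition of the \emph{high-frequency} factor $g$, and then splits the output in modulation into $Q_{\leq k_1+k_2+9}$ and $Q_{\geq k_1+k_2+10}$. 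The low-output-modulation piece is trivially an $L^2_{t,x}$ estimate (with a gain of $2^{k_1+k_2}$), and the high-output-modulation piece is handled by the Fourier-side resonance identity $\tau_3+|\xi_3|^2=(\tau_1+|\xi_1|^2)+(\tau_2+|\xi_2|^2)-H$ with $|H|\les 2^{k_1+k_2}$: one of the two inputs must carry modulation comparable to the output, and the low-frequency factor $f_j$ works in \emph{either} slot because its curvature $|\xi_1|^2\les 2^{2k_1}\ll 2^{k_1+k_2}$ makes its $X^{0,1}$ and $\bar X^{0,1}$ modulations comparable at that scale. This is precisely the observation that lets the paper avoid the $L-\bar L=-2i\Delta$ trade you invoke for the mixed products $u_1v_2$, $u_2v_1$. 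Your trade produces a correction $\Delta u_2\cdot v_1$ whose $L^2_{t,x}$ norm you would need to bound by Lebesgue norms of the \emph{pieces} $u_2$ and $v_1$ individually; but the infimum decomposition in $Y_k$ only controls $\norm{Lu_1}_{L^2}$ and $\norm{\bar L u_2}_{L^2}$, not, say, $\norm{u_2}_{L^2_tL^\infty_x}$ or $\norm{v_1}_{L^\infty_tL^2_x}$, so this step is not justified as written (one would have to restrict to canonical modulation-cutoff decompositions and invoke the boundedness of $Q_{\leq j}P_k$ for $j\geq 2k-100$, which you do not mention). Separately, your remark that ``For HH we instead apply Lemma~\ref{lem:L2est} (second inequality)'' is misplaced for the first three components of $Y_k$: that lemma gives an $L^2_{t,x}$ bound, which does not control $L^\infty_tL^2_x$ or $L^2_tL^{2n/(n-2)}_x$; in the HH regime those are best handled by plain Bernstein and H\"older, with the $s\geq n/2$ condition used for the summation, as you note. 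In short: the outline is in the right spirit and the Leibniz identity is indeed morally equivalent to the resonance function, but the paper's slot-assignment-by-high-frequency-factor plus output-modulation splitting is both cleaner and avoids the unbounded correction term that your $L-\bar L$ trade introduces; I'd recommend you adopt the paper's resonance-based argument for the hardest case.
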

\begin{proof}
We only show the case $s=n/2$. By the embedding $\dot
B^{n/2}_{2,1}\subset L_x^\infty$ we get
\[\norm{u}_{L^\infty_{x,t}}\leq \norm{u}_{L^\infty_{t}\dot
B_{2,1}^{n/2}}\les \norm{u}_{Y^{n/2}}.\] The Lebesgue component can
be easily handled by para-product decomposition and H\"older's
inequality. Now we deal with $X^{s,b}$-type space.  By \eqref{eq:X01ext} it suffices to
show
\begin{align}
\sum_k2^{nk/2}2^{-k}\norm{P_k(fg)}_{X^{0,1}+\bar X^{0,1}}\les&
\norm{f}_{Y^{n/2}}\norm{g}_{Y^{n/2}},\label{eq:algeXsb}
\end{align}
We have
\begin{align*}
&\sum_k2^{nk/2}2^{-k}\norm{P_k(fg)}_{X^{0,1}+\bar X^{0,1}}\\
\les& \sum_{k_i}2^{nk_3/2}2^{-k_3}\norm{P_{k_3}(P_{k_1}fP_{k_2}g)}_{X^{0,1}+\bar X^{0,1}}\\
\les& (\sum_{k_i:k_1\leq k_2}+\sum_{k_i:k_1>
k_2})2^{k_3n/2}2^{-k_3}\norm{P_{k_3}(P_{k_1}fP_{k_2}g)}_{X^{0,1}+\bar X^{0,1}}\\
:=&I+II.
\end{align*}
By symmetry, we only need to estimate the term $I$.

Assume $P_{k_1}f=P_{k_1}f_1+P_{k_1}f_2$,
$P_{k_2}g=P_{k_2}g_1+P_{k_2}g_2$ such that
\begin{align*}
\norm{P_{k_1}f_1}_{X^{0,1}}+\norm{P_{k_1}f_2}_{\bar X^{0,1}}\les& \norm{P_{k_1}f}_{X^{0,1}+\bar X^{0,1}},\\
\norm{P_{k_2}g_1}_{X^{0,1}}+\norm{P_{k_2}g_2}_{\bar X^{0,1}}\les& \norm{P_{k_2}g}_{X^{0,1}+\bar X^{0,1}}.
\end{align*}
Then we have
\begin{align*}
I\les& \sum_{k_i:k_1\leq
k_2}\sum_{j=1}^22^{nk_3/2}2^{-k_3}\norm{P_{k_3}(P_{k_1}f_jP_{k_2}g_1)}_{X^{0,1}}\\
&+\sum_{k_i:k_1\leq
k_2}\sum_{j=1}^22^{nk_3/2}2^{-k_3}\norm{P_{k_3}(P_{k_1}f_jP_{k_2}g_2)}_{\bar X^{0,1}}\\
:=&I_1+I_2.
\end{align*}
We only estimate the term $I_1$ since the term $I_2$ can be
estimated in a similar way. We have
\begin{align}\label{eq:I1}
I_1\les&\sum_{k_i:k_1\leq
k_2}\sum_{j=1}^22^{nk_3/2}2^{-k_3}\norm{P_{k_3}(P_{k_1}f_jP_{k_2}g_1)}_{X^{0,1}}.
\end{align}
First we assume $k_3\leq k_1+5$ in the summation of \eqref{eq:I1}. We
have
\begin{align*}
I_1\les& \sum_{k_i:k_1\leq k_2}\sum_{j=1}^22^{nk_3/2}2^{-k_3}(\norm{P_{k_3}Q_{\leq k_1+k_2+9}(P_{k_1}f_jP_{k_2}g_1)}_{X^{0,1}}\\
&\qquad+\norm{P_{k_3}Q_{\geq k_1+k_2+10}(P_{k_1}f_jP_{k_2}g_1)}_{X^{0,1}})\\
:=&I_{11}+I_{12}.
\end{align*}
For the term $I_{11}$ we have
\begin{align*}
I_{11}\les &\sum_{k_i:k_1\leq
k_2}\sum_{j=1}^22^{k_3n/2}2^{-k_3}2^{k_1+k_2}\norm{P_{k_1}f_j}_{L_t^\infty
L_{x}^n}\norm{P_{k_2}g_1}_{L_t^2L_x^{\frac{2n}{n-2}}}\\
\les& \sum_{k_i:k_1\leq
k_2}\sum_{j=1}^22^{k_3n/2}2^{-k_3}2^{k_2}2^{k_1n/2}\norm{P_{k_1}f_j}_{L_t^\infty
L_{x}^2}\norm{P_{k_2}g_1}_{L_t^2L_x^{\frac{2n}{n-2}}}\\
\les&\norm{f}_{Y^{n/2}}\norm{g}_{Y^{n/2}}.
\end{align*}
For the term $I_{12}$, we need to exploit the nonlinear interactions as in \cite{Guo}. We have
\begin{align*}
&\ft P_{k_3}Q_{\geq k_1+k_2+10}(P_{k_1}f_jP_{k_2}g_1)\\
&=\chi_{k_3}(\xi_3)\chi_{\geq k_1+k_2+10}(\tau_3+|\xi_3|^2)
\int_{\xi_3=\xi_1+\xi_2,\tau_3=\tau_1+\tau_2}\chi_{k_1}(\xi_1)\widehat{f_j}(\tau_1,\xi_1)\chi_{k_2}(\xi_2)\widehat{g_1}(\tau_2,\xi_2).
\end{align*}
We assume $j=1$ since $j=2$ is similar.  On the plane $\{\xi_3=\xi_1+\xi_2,\tau_3=\tau_1+\tau_2\}$ we have
\begin{align}
\tau_3+|\xi_3|^2=\tau_1+|\xi_1|^2+\tau_2+|\xi_2|^2-H(\xi_1,\xi_2)
\end{align}
where $H$ is the resonance function in the product
$P_{k_3}(P_{k_1}f_jP_{k_2}g_1)$
\begin{align}
H(\xi_1,\xi_2)=|\xi_1|^2+|\xi_2|^2-|\xi_1+\xi_2|^2.
\end{align}
Since $|H|\les 2^{k_1+k_2}$, then
one of $P_{k_1}f_j$, $P_{k_2}g_1$ has modulation larger than the
output modulation, namely
\[\max(|\tau_1+|\xi_1|^2|,|\tau_2+|\xi_2|^2|)\ges |\tau_3+|\xi_3|^2|.\]
If $P_{k_1}f_j$ has larger modulation, then
\begin{align*}
I_{12}\les& \sum_{k_i:k_1\leq k_2}2^{nk_3/2}2^{-k_3}\norm{2^{j_3}\norm{P_{k_3}Q_{j_3}(P_{k_1}f_jP_{k_2}g_1)}_{L^2_{t,x}}}_{l^2_{j_3\geq k_1+k_2+10}}\\
\les&
\sum_{k_i:k_1\leq k_2}\sum_{j=1}^22^{nk_3/2}2^{nk_3/2}2^{-k_3}(\sum_{j_3\geq k_1+k_2+10}2^{2j_3}\norm{Q_{\geq j_3}P_{k_1}f_j}^2_{L_{t,x}^2})^{1/2}\norm{P_{k_2}g_1}_{L^\infty_tL_x^2}\\
\les&\sum_{k_i:k_1\leq
k_2}2^{nk_3}2^{-k_3}(\norm{P_{k_1}f_1}_{X^{0,1}}+\norm{P_{k_1}f_2}_{\bar X^{0,1}})\norm{P_{k_2}g_1}_{Y_{k_2}}\\
\les&
\norm{f}_{Y^{n/2}}\norm{g}_{Y^{n/2}}.
\end{align*}
If $P_{k_2}g_1$ has larger modulation, then
\begin{align*}
I_{12}\les& \sum_{k_i:k_1\leq k_2}\sum_{j=1}^22^{nk_3/2}2^{-k_3}\norm{P_{k_1}f_j}_{L_{t,x}^\infty}(\sum_{j_3\geq k_1+k_2}2^{2j_3}\norm{P_{k_2}Q_{\geq j_3}g_1}^2_{L^2_tL_x^2})^{1/2}\\
\les&\sum_{k_i:k_1\leq
k_2}\sum_{j=1}^22^{nk_3/2}2^{-k_3}2^{nk_1/2}\norm{P_{k_1}f_j}_{Y_{k_1}}\norm{P_{k_2}g_1}_{X^{0,1}}\\
\les&
\norm{f}_{Y^{n/2}}\norm{g}_{Y^{n/2}}.
\end{align*}

Now we assume $k_3\geq k_1+6$ in the summation of \eqref{eq:I1} and thus $|k_2-k_3|\leq 4$.  We have
\begin{align*}
I_1\les& \sum_{k_i:k_1\leq k_2}\sum_{j=1}^22^{nk_3/2}2^{-k_3}(\norm{P_{k_3}Q_{\leq k_1+k_2+9}(P_{k_1}f_jP_{k_2}g_1)}_{X^{0,1}}\\
&\qquad+\norm{P_{k_3}Q_{\geq k_1+k_2+10}(P_{k_1}f_jP_{k_2}g_1)}_{X^{0,1}})\\
:=&\tilde I_{11}+\tilde I_{12}.
\end{align*}
By Lemma \ref{lem:L2est} we get
\begin{align*}
\tilde I_{11}\les& \sum_{k_i:k_1\leq k_2}\sum_{j=1}^22^{nk_3/2}2^{-k_3}(\norm{P_{k_3}Q_{\leq k_1+k_2+9}(P_{k_1}f_jP_{k_2}g_1)}_{X^{0,1}}\\
\les&\sum_{k_i:k_1\leq
k_2}2^{nk_3/2}2^{k_1}2^{(n-2)k_1/2}\norm{P_{k_1}f_j}_{Y_{k_1}}\norm{P_{k_2}g_1}_{Y_{k_2}}\les
\norm{f}_{Y^{n/2}}\norm{g}_{Y^{n/2}}.
\end{align*}
For the term $\tilde I_{12}$, similarly as the term $I_{12}$,
one of $P_{k_1}f_j$, $P_{k_2}g_1$ has modulation larger than the
output modulation. If $P_{k_1}f_j$ has larger modulation, then
\begin{align*}
\tilde I_{12}\les& \sum_{k_i:k_1\leq
k_2}\sum_{j=1}^22^{nk_3/2}2^{-k_3}(\sum_{j_3}2^{2j_3}\norm{P_{k_1}Q_{\geq
j_3}f_j}^2_{L_t^2L_x^\infty})^{1/2} \norm{P_{k_2}g_1}_{L_t^\infty
L_x^2}\\
\les&\norm{f}_{Y^{n/2}}\norm{g}_{Y^{n/2}}.
\end{align*}
If $P_{k_2}g_1$ has larger modulation, then
\begin{align*}
\tilde I_{12}\les& \sum_{k_i:k_1\leq
k_2}\sum_{j=1}^22^{nk_3/2}2^{-k_3}\norm{P_{k_1}f_j}_{L_t^\infty L_x^\infty}
(\sum_{j_3}2^{2j_3}\norm{P_{k_2}Q_{\geq j_3}g_1}^2_{L_t^2
L_x^2})^{1/2}\\
\les& \norm{f}_{Y^{n/2}}\norm{g}_{Y^{n/2}}.
\end{align*}
Thus, we complete the proof.
\end{proof}

\begin{lem}\label{lem:L2non}
We have
\begin{align}\label{eq:nonL2est}
\sum_{k_j}2^{k_3(n-2)/2}\norm{P_{k_3}[u\sum_{i=1}^n (\p_{x_i}P_{k_1}v
\p_{x_i}P_{k_2}w)]}_{L_{t,x}^2}\les
\norm{u}_{Y^{n/2}}\norm{v}_{F^{n/2}}\norm{w}_{F^{n/2}}.
\end{align}
\end{lem}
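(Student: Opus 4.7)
The plan is to perform a Littlewood-Paley decomposition $u=\sum_{k_0}P_{k_0}u$ and reduce the estimate to a case analysis based on the relative sizes of the four frequencies. After this decomposition the left-hand side becomes
\[
\sum_{k_0,k_1,k_2,k_3}2^{k_3(n-2)/2}\Bigl\|P_{k_3}\bigl[P_{k_0}u\cdot\textstyle\sum_i(\partial_{x_i}P_{k_1}v)(\partial_{x_i}P_{k_2}w)\bigr]\Bigr\|_{L^2_{t,x}}.
\]
By the symmetry $v\leftrightarrow w$ I may assume $k_1\le k_2$; frequency-support considerations force $k_3\le\max(k_0,k_1,k_2)+O(1)$, so it suffices to treat (A) $k_0\le k_2$ (hence $k_3\le k_2+O(1)$) and (B) $k_0>k_2$ (hence $k_3\sim k_0$).

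In Case (A), I will put $P_{k_0}u$ in $L^\infty_{t,x}$ via Bernstein and the embedding $\dot B^{n/2}_{2,1}\hookrightarrow L^\infty_x$, yielding the factor $2^{nk_0/2}\|P_{k_0}u\|_{Y_{k_0}}$, and estimate the derivative product with the first inequality of Lemma~\ref{lem:L2est},
\[
\bigl\|(\partial_i P_{k_1}v)(\partial_i P_{k_2}w)\bigr\|_{L^2_{t,x}}\les 2^{(n+1)k_1/2}\,2^{k_2/2}\,\|P_{k_1}v\|_{F_{k_1}}\|P_{k_2}w\|_{F_{k_2}}.
\]
Combined with the prefactor $2^{k_3(n-2)/2}$ and $k_3\le k_2$, this produces a net weight $2^{(k_1-k_2)/2}\le 1$ compared to the target $\|u\|_{Y^{n/2}}\|v\|_{F^{n/2}}\|w\|_{F^{n/2}}$, which sums geometrically. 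In Case (B) I will instead put $P_{k_0}u$ in the Strichartz norm $L^2_tL^{2n/(n-2)}_x$, controlled by $\|P_{k_0}u\|_{Y_{k_0}}$, and bound the derivative product in $L^\infty_tL^n_x$ using H\"older and Bernstein,
\[
\|\partial_i P_{k_2}w\|_{L^\infty_tL^n_x}\les 2^{nk_2/2}\|P_{k_2}w\|_{F_{k_2}},\qquad \|\partial_i P_{k_1}v\|_{L^\infty_{t,x}}\les 2^{(n+2)k_1/2}\|P_{k_1}v\|_{F_{k_1}}.
\]
With $k_3\sim k_0$ the arithmetic yields a net weight $2^{k_1-k_0}\le 1$, again geometrically summable.

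The main difficulty is the arithmetic bookkeeping to confirm that in every configuration the exponent mismatch leaves a nonpositive defect with a strictly geometric remainder, so that the four-parameter sum collapses to the product of three $\ell^1$ Besov norms. The dimensional restriction $n\ge 3$ enters precisely here, because Bernstein embeddings such as $L^2\to L^n$ cost a factor $2^{k(n-2)/2}\ge 1$ that fits into the estimate without loss. Note that no $X^{s,b}$ interaction or null-structure exploitation is needed at this step, since the two derivatives are already distributed on two distinct factors; only the Strichartz, maximal/smoothing and Bernstein ingredients of $Y_k$ and $F_k$ are invoked.
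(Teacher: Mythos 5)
Your proof is correct in its essentials and is close in spirit to the paper's argument, but it deviates in a way worth pointing out. The paper splits $u$ into $P_{\le k_3-10}u$ and $P_{\ge k_3-10}u$ relative to the \emph{output} frequency $k_3$, and in all three of its sub-cases (low-frequency $u$; high-frequency $u$ with $k_3\le k_2+20$; high-frequency $u$ with $k_3\ge k_2+20$) it invokes the bilinear $L^2$ estimate of Lemma~\ref{lem:L2est} (first inequality), which rests on the maximal-function and local-smoothing components of $F_k$. You instead Littlewood-Paley decompose $u=\sum_{k_0}P_{k_0}u$ and split on $k_0\lessgtr k_2$; in Case~(A) you still use Lemma~\ref{lem:L2est} as the paper does, but in Case~(B) (high-frequency $u$, $k_3\sim k_0$) you replace it by a pure H\"older--Bernstein--Strichartz argument, placing $u\in L^2_tL^{2n/(n-2)}_x$, $\partial P_{k_1}v\in L^\infty_{t,x}$, and $\partial P_{k_2}w\in L^\infty_tL^n_x$. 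Your exponent arithmetic in both cases is correct (Case~(A) yields a defect $2^{(k_1-k_2)/2}$ after summing $k_3\le k_2+O(1)$; Case~(B) yields a defect $2^{k_1-k_0}$), and all sums collapse to the $\ell^1$ Besov products. This replacement in Case~(B) is a genuine simplification: it shows that when $u$ carries the dominant frequency the maximal/smoothing structure of $F_k$ is not needed at all, only the Strichartz piece of $Y_k$ and Bernstein.

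One small bookkeeping point: as stated, the dichotomy $k_0\le k_2$ versus $k_0>k_2$ is slightly too tight, because the implication $k_0>k_2\Rightarrow k_3\sim k_0$ fails when $k_0=k_2+1$ (the output can drop to low frequency by cancellation between $P_{k_0}u$ and $P_{k_1}vP_{k_2}w$). The fix is standard: split on $k_0\le k_2+C$ versus $k_0>k_2+C$ for a fixed constant $C$ of order $10$. Your Case~(A) estimate is insensitive to the precise upper bound on $k_0$ (the defect $2^{(k_1-k_2)/2}$ does not involve $k_0$, and the $k_0$-sum is absorbed into $\|u\|_{Y^{n/2}}$), so it covers the widened Case~(A) without change, and then Case~(B) really does force $|k_3-k_0|=O(1)$ as you assert.
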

\begin{proof}
We have
\begin{align*}
\mbox{LHS of \eqref{eq:nonL2est}}\les&
\sum_{k_j}2^{k_3(n-2)/2}\norm{P_{k_3}[P_{\geq
k_3-10}u\sum_{i=1}^n
(\p_{x_i}P_{k_1}v \p_{x_i}P_{k_2}w)]}_{L_{t,x}^2}\\
&+ \sum_{k_j}2^{k_3(n-2)/2}\norm{P_{k_3}[P_{\leq k_3-10}u\sum_{i=1}^n
(\p_{x_i}P_{k_1}v \p_{x_i}P_{k_2}w)]}_{L_{t,x}^2}\\
:=&I+II.
\end{align*}
By symmetry, we may assume $k_1\leq k_2$ in the above summation.
For the term $II$, since $n\geq 3$, then we have
\begin{align*}
II\les& \norm{u}_{Y^{n/2}} \sum_{k_j}2^{k_3(n-2)/2}\norm{\tilde
P_{k_3}\sum_{i=1}^n (\p_{x_i}P_{k_1}v
\p_{x_i}P_{k_2}w)]}_{L_{t,x}^2}\\
\les&\norm{u}_{Y^{n/2}} \sum_{k_1,k_3\leq k_2+5}2^{k_3(n-2)/2}2^{k_1+k_2}2^{[(n-1)k_1-k_2]/2}\norm{P_{k_1}v}_{F_{k_1}}\norm{P_{k_2}w}_{F_{k_2}}\\
\les& \norm{u}_{Y^{n/2}}\norm{v}_{F^{n/2}}\norm{w}_{F^{n/2}}.
\end{align*}
For the term $I$, if $k_3\leq k_2+20$, then we get from Lemma \ref{lem:L2est} that
\begin{align*}
I\les& \sum_{k_j}2^{nk_3/2}2^{k_3(n-2)/2}\norm{P_{\geq k_3-10}u}_{L_t^\infty
L_x^2}\norm{\sum_{i=1}^n (\p_{x_i}P_{k_1}v
\p_{x_i}P_{k_2}w)]}_{L_{t,x}^2}\\
\les&\sum_{k_j}2^{nk_3/2}2^{k_3(n-2)/2}2^{(n+1)k_1/2}2^{k_2/2}\norm{P_{\geq
k_3-10}u}_{L_t^\infty L_x^2}\norm{P_{k_1}v}_{F_{k_1}}\norm{P_{k_2}w}_{F_{k_2}}\\
\les& \norm{u}_{Y^{n/2}}\norm{v}_{F^{n/2}}\norm{w}_{F^{n/2}}.
\end{align*}
If $k_3\geq k_2+20$, then $u$ has frequency $\sim 2^{k_3}$, and thus we get
\begin{align*}
I\les& \sum_{k_j}2^{k_3(n-2)/2}\norm{P_{k_3}u}_{L_t^\infty
L_x^2}\norm{\sum_{i=1}^n (\p_{x_i}P_{k_1}v
\p_{x_i}P_{k_2}w)]}_{L_{t}^2L_x^\infty}\\
\les& \sum_{k_j}2^{k_3(n-2)/2}\norm{P_{k_3}u}_{L_t^\infty
L_x^2}2^{nk_2/2}\norm{\sum_{i=1}^n (\p_{x_i}P_{k_1}v
\p_{x_i}P_{k_2}w)]}_{L_{t}^2L_x^2}\\
\les&\sum_{k_j}2^{k_3(n-2)/2}2^{k_1+k_2}2^{(n-1)k_1/2}2^{(n-1)k_2/2}\norm{P_{k_3}u}_{L_t^\infty L_x^2}\norm{P_{k_1}v}_{F_{k_1}}\norm{P_{k_2}w}_{F_{k_2}}\\
\les& \norm{u}_{Y^{n/2}}\norm{v}_{F^{n/2}}\norm{w}_{F^{n/2}}.
\end{align*}
Therefore we complete the proof.
\end{proof}

\begin{lem}\label{lem:Nnon}
We have
\begin{align}
\norm{u\sum_{i=1}^n(\p_{x_i}v \partial_{x_i}w)}_{N^{n/2}}\les&
\norm{u}_{Y^{n/2}}\norm{v}_{F^{n/2}\cap Z^{n/2}}\norm{w}_{F^{n/2}\cap Z^{n/2}}.
\end{align}
\end{lem}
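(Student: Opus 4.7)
I would argue by Littlewood--Paley decomposition and case analysis on the frequency interactions. Write
\[
P_k\Bigl(u\sum_{i=1}^n\p_{x_i}v\,\p_{x_i}w\Bigr)=\sum_{k_1,k_2,k_3}P_k\Bigl(P_{k_1}u\sum_{i=1}^n\p_{x_i}P_{k_2}v\,\p_{x_i}P_{k_3}w\Bigr),
\]
and by the $v\leftrightarrow w$ symmetry assume $k_2\le k_3$; frequency support forces the output $k$ to satisfy $k\le\max(k_1,k_3)+C$ with the maximum frequency attained at least twice. For each configuration I choose the most convenient of the four components of the $N_k$-norm, namely (i) $L^1_tL^2_x$, (ii) $2^{-k/2}\sup_{\ve e}L^{1,2}_{\ve e}$, (iii) $X^{0,-1/2,1}_+$, or (iv) $2^{-k}L^2_{t,x}$, and then sum the resulting estimates.

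In the low-output regime $k\le\max(k_1,k_3)-C$ (high-to-low interactions) I route the estimate through piece (iv): summing with the weight $2^{nk/2}$ yields $\sum_k 2^{k(n-2)/2}\norm{P_k(\cdots)}_{L^2_{t,x}}$, which is exactly the quantity bounded by Lemma~\ref{lem:L2non} in terms of $\norm{u}_{Y^{n/2}}\norm{v}_{F^{n/2}}\norm{w}_{F^{n/2}}$, so this contribution never sees $Z^{n/2}$. In the high-output regime $k\sim\max(k_1,k_3)$ I further split on which input carries the maximum: if $k_1$ is the maximum (so $u$ is the top-frequency factor) I use piece (i), placing $u\in L^\infty_tL^2_x$ (from $Y^{n/2}$) and the product $\p_{x_i}P_{k_2}v\,\p_{x_i}P_{k_3}w\in L^1_tL^\infty_x$ via H\"older, Bernstein and the Strichartz pair $L^2_tL^{2n/(n-2)}_x\subset Y_k$; if $k_3$ is the maximum I use piece (ii), aligning $\ve e=\ve e_j$ with a coordinate axis so that $\p_{x_j}P_{k_3}w\sim 2^{k_3}P_{k_3,\ve e_j}w$ modulo a bounded multiplier, then exploiting the local smoothing bound $\norm{P_{k_3,\ve e_j}w}_{L^{\infty,2}_{\ve e_j}}\lesssim 2^{-k_3/2}\norm{P_{k_3}w}_{F_{k_3}}$ together with H\"older putting $u\in L^{2,\infty}_{\ve e_j}$ (the local smoothing piece of $Y^{n/2}$) and $\p_{x_i}P_{k_2}v\in L^2_{t,x}$ (Strichartz) into $L^{1,2}_{\ve e_j}$.

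The main obstacle is the fully resonant case $k\sim k_1\sim k_3$, which has to be handled by piece (iii). Using the identity $\tau+|\xi|^2=\sum_i(\tau_i+|\xi_i|^2)-H(\xi_1,\xi_2)$ with $|H|\lesssim 2^{k_1+k_2}$, low output modulation forces all factors to have low modulation and the estimate again reduces to the trilinear $L^2$-bound of Lemma~\ref{lem:L2non}; high output modulation $2^j\gg 2^{k_1+k_2}$ forces at least one input factor to have modulation $\gtrsim 2^j$, so the $X^{0,1/2,\infty}$-component of $F_k$ on $v$ or $w$ absorbs a $2^{-j/2}$ gain that compensates the $X^{0,-1/2,1}_+$ weight. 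The $Z^{n/2}$-norm enters precisely where the pure Schr\"odinger-maps arguments would invoke a $\bar X^{0,1/2,\infty}$-bound on a conjugated factor: such a bound is no longer available because the operator $L=\p_t-i\Delta$ (paired with the dissipative $-\e\Delta$) breaks the $X^{s,b}/\bar X^{s,b}$ symmetry used in the inviscid case. I would convert via $\norm{Q_{\geq j}v}_{L^2}\lesssim 2^{-j}\norm{Lv}_{L^2}=2^{k-j}\norm{v}_{Z_k}$, which is sharper than the $X^{0,1/2,\infty}$-bound exactly when $j\gtrsim 2k$ — the modulation regime generated by top-level resonances — and thus closes the resonant case with $\e$-uniform constants.
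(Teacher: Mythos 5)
Your proposal takes a genuinely different route from the paper's proof, but it has a concrete gap that I don't see how to close. The paper's key ingredient — entirely absent from your plan — is the Bejenaru null structure identity
\[-2\nabla v\cdot\nabla w=(i\p_t+\Delta)v\cdot w+v\cdot(i\p_t+\Delta)w-(i\p_t+\Delta)(v\cdot w),\]
which is applied in the regime where $g=\sum_i\p_{x_i}v\,\p_{x_i}w$, the lowest-frequency factor $u$, and the output all have low modulation. After this rewriting, the terms with $Lv$ and $Lw$ are where the $Z^{n/2}$ norm actually enters (via $\norm{P_{k}Lv}_{L^2}=2^{k}\norm{P_kv}_{Z_k}$), and the term with $L(vw)$ is handled by a further modulation split. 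Your use of $Z^{n/2}$ — only through $\norm{Q_{\geq j}v}_{L^2}\lesssim 2^{k-j}\norm{v}_{Z_k}$ at high modulation $j$ — serves a different purpose and never touches this resonant low-modulation block.

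The gap occurs precisely in the configuration $k_1\ll k_2\leq k_3\sim k_4$ with all modulations $\lesssim 2^{k_2+k_3}$ (the paper's Case 2, term $I_2$). There, the $L^{1,2}_{\ve e}$ piece produces a loss of $2^{(k_2-k_1)/2}$ over the target, the $L^1_tL^2_x$ piece loses $2^{(k_3-k_2)(n-2)/2}$, and the $X^{0,-1/2,1}_+$ piece gives a divergent sum over low modulation indices. You propose to route this block to ``piece (iv)'' and invoke Lemma~\ref{lem:L2non}, but the $2^{-k}L^2_{t,x}$ component of $N_k$ sits \emph{outside} the infimum in the definition — it is an additional obligation to be verified, not an alternative one can choose. (It is there to propagate the $Z_k$ norm through Lemma~\ref{lem:linearXsb}.) So the $L^2$ bound from Lemma~\ref{lem:L2non} does not excuse you from also bounding the infimum piece in this configuration, and without the null structure the infimum piece cannot be closed. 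Your modulation dichotomy (low output modulation versus high) also does not help here: for low output modulation the $X^{0,-1/2,1}_+$ weight $2^{-j/2}$ is large rather than small, so there is nothing to absorb. The remaining parts of your plan (the $k_2\lesssim k_1$ high-output case via $L^{1,2}_{\ve e}$, and the high-modulation part via $X^{0,-1/2,1}_+$) are in line with the paper's $I_1$, $II$, and Case~1 arguments, but the omission above is fatal.
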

\begin{proof}
By the definition of $N^{n/2}$, the $L^2$ component was handled by the previous lemma. We only need to control
\begin{align}\label{eq:tripf1}
\sum_{k_i}2^{k_4n/2}\norm{P_{k_4}[P_{k_1}u\sum_{i=1}^n(P_{k_2}\p_{x_i}v
\partial_{x_i}P_{k_3}w)]}_{N_{k_4}}.
\end{align}
By symmetry we may assume $k_2\leq k_3$ in the above summation.
If in the above summation we assume $k_4\leq k_1+40$, then
\begin{align*}
\eqref{eq:tripf1}\les& \sum_{k_i}2^{k_4n/2}\norm{P_{k_4}[P_{k_1}uP_{k_2}\p_{x_i}v
\partial_{x_i}P_{k_3}w]}_{L_t^1L_x^{2}}\\
\les& \sum_{k_i}2^{k_1n/2}\norm{P_{k_1}u}_{L_t^\infty L_x^2}2^{k_2}\norm{P_{k_2}v}_{L_t^2L_x^{\infty}}2^{k_3}\norm{P_{k_3}w}_{L_t^2L_x^{\infty}}\\
\les&\sum_{k_i}2^{k_1n/2}\norm{P_{k_1}u}_{L_t^\infty L_x^2}2^{k_2n/2}\norm{P_{k_2}v}_{L_{t}^{2}L_x^{\frac{2n}{n-2}}}2^{k_3n/2}\norm{P_{k_3}w}_{L_{t}^{2}L_x^{\frac{2n}{n-2}}}\\
\les&\norm{u}_{Y^{n/2}}\norm{v}_{F^{n/2}}\norm{w}_{F^{n/2}}.
\end{align*}
Thus from now on we assume $k_4\geq k_1+40$ in the summation of \eqref{eq:tripf1}. We bound the summation case by case.

{\bf Case 1:} $k_2\leq k_1+20$

In this case we have $k_4\geq k_2+20$ and hence $|k_4-k_3|\leq 5$. By Lemma \ref{lem:L2est} we
get
\begin{align*}
\eqref{eq:tripf1}\les&
\sum_{k_i}2^{k_4n/2}2^{-k_4/2}\norm{P_{k_4}[P_{k_1}uP_{k_2}\p_{x_i}
v
\partial_{x_i}P_{k_3}w]}_{L_{\ve{e}}^{1,2}}\\
\les& \sum_{k_i}2^{k_4n/2}2^{-k_4/2}\norm{P_{k_1}uP_{k_3}\partial_{x_i}w}_{L_{x,t}^{2}}\norm{P_{k_2}\p_{x_i}v}_{L_{\ve{e}}^{2,\infty}}\\
\les& \sum_{k_i}2^{k_4n/2}2^{-k_4/2}2^{(n-1)k_1/2}2^{-k_3/2}2^{(n-1)k_2/2}\norm{P_{k_1}u}_{Y_{k_1}}\norm{P_{k_3}\partial_{x_i}w}_{F_{k_3}}\norm{P_{k_2}\p_{x_i}v}_{F_{k_2}}\\
\les&\norm{u}_{Y^{n/2}}\norm{v}_{F^{n/2}}\norm{w}_{F^{n/2}}.
\end{align*}

{\bf Case 2:} $k_2\geq k_1+21$

In this case we have $k_4\leq k_3+40$. Let $g=\sum_{i=1}^n(P_{k_2}\p_{x_i}v\cdot
P_{k_3}\partial_{x_i}w)$. Then we have
\begin{align*}
\eqref{eq:tripf1}\les&\sum_{k_i}2^{k_4n/2}\norm{P_{k_4}[P_{k_1}uQ_{\leq k_2+k_3}g]}_{N_{k_4}}+\sum_{k_i}2^{k_4n/2}\norm{P_{k_4}[P_{k_1}uQ_{\geq k_2+k_3}g]}_{N_{k_4}}\\
:=&I+II.
\end{align*}
First we estimate the term $II$.  We have
\begin{align*}
II\les &\sum_{k_i}2^{k_4n/2}\norm{P_{k_4}[P_{k_1}Q_{\geq k_2+k_3-10}u\cdot Q_{\geq k_2+k_3}g]}_{N_{k_4}}\\
&+\sum_{k_i}2^{k_4n/2}\norm{P_{k_4}[P_{k_1}Q_{\leq k_2+k_3-10}u\cdot Q_{\geq k_2+k_3}g]}_{N_{k_4}}\\
:=&II_1+II_2.
\end{align*}
For the term $II_1$ we have
\begin{align*}
II_1\les &\sum_{k_i}2^{k_4n/2}\norm{P_{k_4}[P_{k_1}Q_{\geq k_2+k_3-10}u\cdot Q_{\geq k_2+k_3}g]}_{L_t^1L_x^2}\\
\les &\sum_{k_i}2^{k_4n/2}\norm{P_{k_1}Q_{\geq k_2+k_3-10}u}_{L_t^2L_x^\infty}\norm{Q_{\geq k_2+k_3}g]}_{L_t^2L_x^2}\\
\les &\sum_{k_i}2^{k_4n/2}2^{k_1n/2}\norm{P_{k_1}Q_{\geq k_2+k_3-10}u}_{L_t^2L_x^2}\norm{Q_{\geq k_2+k_3}g]}_{L_t^2L_x^2}\\
\les &\sum_{k_i}2^{k_4n/2}2^{k_1n/2}2^{-(k_2+k_3)}\norm{P_{k_1}Q_{\geq 2k_1+10}u}_{X^{0,1}}\\
&\cdot 2^{[(n-1)k_2-k_3]/2}2^{k_2+k_3}\norm{P_{k_2}v}_{F_{k_2}}\norm{P_{k_3}w}_{F_{k_3}}\\
\les&\norm{u}_{Y^{n/2}}\norm{v}_{F^{n/2}}\norm{w}_{F^{n/2}}.
\end{align*}
For the term $II_2$, since $k_4\geq k_1+40$, then we may assume $g$
has frequency of size $2^{k_4}$.  The resonance function in the
product $P_{k_1}u\cdot P_{k_4}g$ is of size $\les 2^{k_1+k_4}$.
 Thus the output modulation is of size $\ges 2^{k_2+k_3}$.  Then we get
\begin{align*}
II_2\les &\sum_{k_i}2^{k_4n/2}2^{-(k_2+k_3)/2}\norm{P_{k_4}[P_{k_1}Q_{\leq k_2+k_3-10}u\cdot Q_{\geq k_2+k_3}g]}_{L^2_{t,x}}\\
\les &\sum_{k_i}2^{k_4n/2}2^{-(k_2+k_3)/2}2^{k_1n/2}\norm{P_{k_1}u}_{L_t^\infty L_x^2}\cdot \norm{g}_{L^2_{t,x}}\\
\les&\sum_{k_i}2^{k_4n/2}2^{-(k_2+k_3)/2}2^{k_1n/2}2^{[(n-1)k_2-k_3]/2}2^{k_2+k_3} \norm{P_{k_1}u}_{Y_{k_1}}\norm{P_{k_2}v}_{F_{k_2}}\norm{P_{k_3}w}_{F_{k_3}}\\
\les&\norm{u}_{Y^{n/2}}\norm{v}_{F^{n/2}}\norm{w}_{F^{n/2}}.
\end{align*}

Now we estimate the term $I$.  We have
\begin{align*}
I\les &\sum_{k_i}2^{k_4n/2}\norm{P_{k_4}[P_{k_1}u\cdot Q_{\leq
k_2+k_3}\sum_{i=1}^n(P_{k_2}\p_{x_i}Q_{\geq k_2+k_3+40}v\cdot
P_{k_3}\partial_{x_i}w)]}_{N_{k_4}}\\
&+\sum_{k_i}2^{k_4n/2}\norm{P_{k_4}[P_{k_1}u\cdot Q_{\leq
k_2+k_3}\sum_{i=1}^n(P_{k_2}\p_{x_i}Q_{\leq k_2+k_3+39}v\cdot
P_{k_3}\partial_{x_i}w)]}_{N_{k_4}}\\
:=&I_1+I_2.
\end{align*}
For the term $I_1$, since the resonance function in the product
$P_{k_2}v\cdot P_{k_3}w$ is of size $\les 2^{k_2+k_3}$, then we may
assume $P_{k_3}w$ has modulation of size $\ges 2^{k_2+k_3}$.  Then
we get
\begin{align*}
I_1\les& \sum_{k_i}2^{k_4n/2}2^{-k_4/2}\norm{P_{k_4}[P_{k_1}u\cdot
Q_{\leq k_2+k_3}\sum_{i=1}^n(P_{k_2}\p_{x_i}Q_{\geq
k_2+k_3+40}v\cdot
P_{k_3}\partial_{x_i}Q_{\geq k_2+k_3-5}w)]}_{L^{1,2}_{\ve e}}\\
\les&\sum_{k_i}2^{k_4n/2}2^{-k_4/2}\norm{P_{k_1}u}_{L^\infty_{t,x}}2^{k_2+k_3}\norm{P_{k_2}v}_{L^{2,\infty}_{\ve e}}\norm{P_{k_3}Q_{\geq k_2+k_3-5}w}_{L^2_{t,x}}\\
\les&\sum_{k_i}2^{k_4n/2}2^{-k_4/2}2^{(k_2+k_3)/2}2^{(n-1)k_2/2}2^{k_1n/2}\norm{P_{k_1}u}_{Y_{k_1}}\norm{P_{k_2}v}_{F_{k_2}}\norm{P_{k_3}w}_{F_{k_3}}\\
\les&\norm{u}_{Y^{n/2}}\norm{v}_{F^{n/2}}\norm{w}_{F^{n/2}}.
\end{align*}
Finally, we estimate the term $I_2$.  For this term, we need to use
the null structure observed by Bejenaru \cite{Bej}.  We can rewrite
\begin{align}
-2\nabla u\cdot \nabla v=(i\p_t+\Delta)u\cdot v+u\cdot
(i\p_t+\Delta)v-(i\p_t+\Delta)(u\cdot v).
\end{align}
Then we have
\begin{align*}
I_2=&\sum_{k_i}2^{k_4n/2}\norm{P_{k_4}[P_{k_1}u\cdot Q_{\leq
k_2+k_3}(P_{k_2}LQ_{\leq k_2+k_3+39}v\cdot
P_{k_3}w)]}_{N_{k_4}}\\
&+\sum_{k_i}2^{k_4n/2}\norm{P_{k_4}[P_{k_1}u\cdot Q_{\leq
k_2+k_3}(P_{k_2}Q_{\leq k_2+k_3+39}v\cdot
P_{k_3}Lw)]}_{N_{k_4}}\\
&+\sum_{k_i}2^{k_4n/2}\norm{P_{k_4}[P_{k_1}u\cdot Q_{\leq
k_2+k_3}L(P_{k_2}Q_{\leq k_2+k_3+39}v\cdot
P_{k_3}w)]}_{N_{k_4}}\\
:=&I_{21}+I_{22}+I_{23}.
\end{align*}
For the term $I_{21}$, we have
\begin{align*}
I_{21}\les& \sum_{k_i}2^{k_4n/2}\norm{P_{k_4}[P_{k_1}u\cdot Q_{\leq
k_2+k_3}(P_{k_2}L Q_{\leq k_2+k_3+39}v\cdot P_{k_3}w)]}_{L_t^{1}L^2_{x}}\\
\les&\sum_{k_i}2^{k_4n/2}2^{k_1n/2}\norm{P_{k_1}u}_{L_t^\infty
L_x^2}2^{k_2(n-2)/2}\norm{P_{k_2}L v}_{L^2_{t,x}}\norm{P_{k_3}w}_{L_t^{2}L_x^{\frac{2n}{n-2}}}\\
\les&\norm{u}_{Y^{n/2}}\norm{v}_{Z^{n/2}}\norm{w}_{F^{n/2}}.
\end{align*}
For the term $I_{22}$, we may assume  $w$ has modulation $\les 2^{k_2+k_3}$. Then we get
\begin{align*}
I_{22}\les&\sum_{k_i}2^{k_4n/2}2^{-k_4/2}\norm{P_{k_4}[P_{k_1}u\cdot
Q_{\leq k_2+k_3}(P_{k_2}Q_{\leq k_2+k_3+39}v\cdot
P_{k_3}Q_{\leq k_2+k_3+100}Lw)]}_{L^{1,2}_{\ve e}}\\
\les&\sum_{k_i}2^{k_4n/2}2^{-k_4/2}2^{k_1n/2}\norm{P_{k_1}u}_{L_t^\infty
L_x^2}\norm{P_{k_2}v}_{L^{2,\infty}_{\ve e}}\norm{
P_{k_3}Q_{\leq k_2+k_3+100}Lw)]}_{L^{2}_{t,x}}\\
\les&\sum_{k_i}2^{k_4n/2}2^{-k_4/2}2^{nk_1/2}2^{(n-1)k_2/2}2^{(k_2+k_3)/2}\norm{P_{k_1}u}_{Y_{k_1}}\norm{P_{k_2}v}_{F_{k_2}}\norm{P_{k_3}w}_{X^{0,1/2,\infty}}\\
\les&\norm{u}_{Y^{n/2}}\norm{v}_{F^{n/2}}\norm{w}_{F^{n/2}}.
\end{align*}
Next we estimate the term $I_{23}$. We have
\begin{align*}
I_{23}\les& \sum_{k_i}2^{k_4n/2}\norm{P_{k_4}[P_{k_1}u\cdot
Q_{[k_1+k_4+100, k_2+k_3]}L(P_{k_2}Q_{\leq k_2+k_3+39}v\cdot
P_{k_3}w)]}_{N_{k_4}} \\
&+ \sum_{k_i}2^{k_4n/2}\norm{P_{k_4}[P_{k_1}u\cdot Q_{\leq
k_1+k_4+99}L(P_{k_2}Q_{\leq k_2+k_3+39}v\cdot
P_{k_3}w)]}_{N_{k_4}} \\
:=&I_{231}+I_{232}.
\end{align*}
For the term $I_{232}$ we have
\begin{align*}
I_{232}\les& \sum_{k_i}2^{k_4n/2}2^{-k_4/2}\norm{P_{k_4}[P_{k_1}u\cdot
Q_{\leq k_1+k_4+99}L(P_{k_2}Q_{\leq k_2+k_3+39}v\cdot
P_{k_3}w)]}_{L^{1,2}_{\ve e}} \\
\les & \sum_{k_i}2^{k_4n/2}2^{-k_4/2}\norm{P_{k_1}u}_{L^{2,\infty}_{\ve e}}2^{k_1+k_4}2^{[(n-1)k_2-k_3]/2}\norm{P_{k_2}v}_{F_{k_2}}\norm{P_{k_3}w}_{F_{k_3}}\\
\les&\norm{u}_{Y^{n/2}}\norm{v}_{F^{n/2}}\norm{w}_{F^{n/2}}.
\end{align*}
For the term $I_{231}$ we have
\begin{align*}
I_{231}\les&
\sum_{k_i}\sum_{j_2=k_1+k_4+100}^{k_2+k_3}2^{k_4n/2}\norm{P_{k_4}Q_{\leq
j_2-10}[P_{k_1}u\cdot Q_{j_2}L(P_{k_2}Q_{\leq k_2+k_3+39}v\cdot
P_{k_3}w)]}_{N_{k_4}}\\
&+\sum_{k_i}\sum_{j_2=k_1+k_4+100}^{k_2+k_3}2^{k_4n/2}\norm{P_{k_4}Q_{\geq
j_2-9}[P_{k_1}u\cdot Q_{j_2}L(P_{k_2}Q_{\leq k_2+k_3+39}v\cdot
P_{k_3}w)]}_{N_{k_4}}\\
:=&I_{2311}+I_{2312}.
\end{align*}
For the term $I_{2312}$ we have
\begin{align*}
I_{2312}\les&
\sum_{k_i}\sum_{j_2=k_1+k_4+100}^{k_2+k_3}\sum_{j_3\geq
k_2-9}2^{k_4n/2}2^{-j_3/2}\\
&\cdot\norm{P_{k_4}Q_{j_3}[P_{k_1}u\cdot Q_{j_2}L(P_{k_2}Q_{\leq
k_2+k_3+39}v\cdot
P_{k_3}w)]}_{L^2_{t,x}}\\
\les& \sum_{k_i}2^{k_4n/2}2^{k_1n/2}2^{(k_2+k_3)/2}2^{[(n-1)k_2-k_3]/2}\norm{P_{k_1}u}_{Y_{k_1}}\norm{P_{k_2}v}_{F_{k_2}}\norm{P_{k_3}w}_{F_{k_3}}\\
\les&\norm{u}_{Y^{n/2}}\norm{v}_{F^{n/2}}\norm{w}_{F^{n/2}}.
\end{align*}
For the term $I_{2311}$ we have
\begin{align*}
I_{2311}\les&
\sum_{k_i}\sum_{j_2=k_1+k_4+100}^{k_2+k_3}2^{k_4n/2}\norm{P_{k_4}Q_{\leq
j_2-10}[P_{k_1}\tilde Q_{j_2}u\cdot Q_{j_2}L(P_{k_2}Q_{\leq
k_2+k_3+39}v\cdot
P_{k_3}w)]}_{L_t^1L_x^2}\\
\les&\sum_{k_i}\sum_{j_2=k_1+k_4+100}^{k_2+k_3}2^{k_4n/2}2^{k_1n/2}\norm{P_{k_1}\tilde Q_{j_2}u}_{L^2_{t,x}}2^{j_2}2^{[(n-1)k_2-k_3]/2}\norm{P_{k_2}v}_{F_{k_2}}\norm{P_{k_3}w}_{F_{k_3}}\\
\les&\norm{u}_{Y^{n/2}}\norm{v}_{F^{n/2}}\norm{w}_{F^{n/2}}.
\end{align*}
Therefore, we complete the proof.
\end{proof}

Combining all the estimates above we get
\begin{lem}[Nonlinear estimates]\label{lem:nonlinear}
Assume $u\in F^{n/2}\cap Z^{n/2}$ with $\norm{u}_{Y^{n/2}}\ll 1$. Then
\begin{align*}
\normo{\frac{\bar
u}{1+|u|^2}\sum_{j=1}^n(\p_{x_j}u)^2}_{N^{n/2}}\les&
\frac{\norm{u}_{Y^{n/2}}}{1-\norm{u}_{Y^{n/2}}^2}
\norm{u}_{F^{n/2}\cap Z^{n/2}}\norm{u}_{F^{n/2}\cap Z^{n/2}}.
\end{align*}
\end{lem}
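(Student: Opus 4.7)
The plan is to reduce the full nonlinear expression to a trilinear expression plus a controlled algebraic factor, via a Taylor expansion of the rational prefactor. Concretely, since $\norm{u}_{Y^{n/2}}\ll 1$ and $Y^{n/2}\hookrightarrow L^\infty_tL^\infty_x$ through the embedding $\dot B^{n/2}_{2,1}\hookrightarrow L^\infty$, we have $\norm{u}_{L^\infty_{t,x}}<1$, so the geometric series
\[\frac{\bar u}{1+|u|^2}=\sum_{k=0}^\infty (-1)^k\,\bar u\,|u|^{2k}\]
converges pointwise and termwise in $Y^{n/2}$. I would insert this expansion and distribute the $N^{n/2}$ norm termwise, reducing matters to bounding $\norm{\bar u\,|u|^{2k}\sum_{j=1}^n(\p_{x_j}u)^2}_{N^{n/2}}$ for each $k\geq 0$ and summing in $k$.

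For each fixed $k$, apply Lemma \ref{lem:Nnon} with $U_k:=\bar u\,|u|^{2k}$ playing the role of $u$ and with $v=w=u$, obtaining
\[\normo{U_k\sum_{j=1}^n(\p_{x_j}u)^2}_{N^{n/2}}\les \norm{U_k}_{Y^{n/2}}\,\norm{u}_{F^{n/2}\cap Z^{n/2}}^2.\]
Next, bound $\norm{U_k}_{Y^{n/2}}$ by iterating the algebra property in Lemma \ref{lem:alg}. First one observes that the $Y^{n/2}$ norm is invariant under complex conjugation: the Lebesgue pieces $L^\infty_tL^2_x$, $L^2_tL^{2n/(n-2)}_x$, $L^{2,\infty}_{\ve e}$ are manifestly so, and the last component $\inf_{f=f_1+f_2}(\norm{Lf_1}_{L^2_{t,x}}+\norm{\bar Lf_2}_{L^2_{t,x}})$ is invariant because conjugation swaps the roles of $L=\p_t-i\Delta$ and $\bar L=\p_t+i\Delta$ and of the two summands in the decomposition. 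Hence $\norm{\bar u}_{Y^{n/2}}=\norm{u}_{Y^{n/2}}$, and iterating Lemma \ref{lem:alg} yields
\[\norm{U_k}_{Y^{n/2}}=\norm{\bar u\,|u|^{2k}}_{Y^{n/2}}\les C^{2k}\,\norm{u}_{Y^{n/2}}^{2k+1}\]
for some absolute constant $C$ (coming from the $2$ in $\norm{uv}_{Y^{n/2}}\les 2\norm{u}_{Y^{n/2}}\norm{v}_{Y^{n/2}}$).

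Summing in $k$ produces the geometric series $\sum_{k=0}^\infty C^{2k}\norm{u}_{Y^{n/2}}^{2k}$, which converges under the smallness hypothesis and gives the factor $(1-\norm{u}_{Y^{n/2}}^2)^{-1}$ (after absorbing $C$ into the $\les$, using $\norm{u}_{Y^{n/2}}\ll 1$). Combining these steps yields the stated bound. The main technical point is not any single step but the careful bookkeeping in the algebra iteration: one must verify that Lemma \ref{lem:alg}, whose proof treats $X^{0,1}$ and $\bar X^{0,1}$ on equal footing, extends without loss to products that mix $u$ and $\bar u$, so that the constant $C$ is genuinely independent of $k$ and the geometric series closes. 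Once this is settled, the rest is assembly of already-proved estimates.
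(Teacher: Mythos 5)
Your proof is correct and follows essentially the same route as the paper's: Taylor-expand $\bar u/(1+|u|^2)$ as a geometric series (justified by $Y^{n/2}\hookrightarrow L^\infty$), estimate each term via the trilinear bound of Lemma~\ref{lem:Nnon} with $\bar u|u|^{2k}$ in the $Y^{n/2}$ slot and $v=w=u$, control $\norm{\bar u|u|^{2k}}_{Y^{n/2}}$ by iterating the algebra property of Lemma~\ref{lem:alg}, and sum the resulting geometric series under the smallness hypothesis. The paper's proof is terse and simply cites Lemmas~\ref{lem:Nnon}, \ref{lem:L2non}, \ref{lem:alg}; your write-up supplies useful details the paper leaves implicit, in particular the conjugation-invariance of the $Y_k$ norm (needed so that Lemma~\ref{lem:alg}, stated for two factors, bounds the mixed products of $u$ and $\bar u$ with a $k$-independent constant) and the bookkeeping of the constant in the geometric series, which is absorbed by the hypothesis $\norm{u}_{Y^{n/2}}\ll 1$.
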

\begin{proof}
Since $Y^{n/2}\subset L^\infty$, then
\[\frac{\bar
u}{1+|u|^2}\sum_{j=1}^n(\p_{x_j}u)^2=\sum_{k=0}^\infty \bar u(-1)^k|u|^{2k}\sum_{j=1}^n(\p_{x_j}u)^2.\]
The lemma follows from Lemma \ref{lem:Nnon}, Lemma \ref{lem:L2non} and Lemma \ref{lem:alg}.
\end{proof}

\section{The limit behaviour}

In this section we prove Theorem \ref{thm:Schmap}. It is equivalent
to prove

\begin{thm}\label{thm:dGL}
Assume $n\geq 3$, $\e\in [0,1]$. There exists $0<\delta\ll 1$ such
for any $\phi\in \dot B_{2,1}^{n/2}$ with $\norm{\phi}_{\dot
B_{2,1}^{n/2}}\leq \delta$, there exists a unique global solution
$u_\e$ to \eqref{eq:dGL} such that
\[\norm{u_\e}_{F^{n/2}\cap Z^{n/2}}\les
\delta,\] where the implicit constant is independent of $\e$. The
map $\phi\to u_\e$ is Lipshitz from $B_\delta(\dot B_{2,1}^{n/2})$
to $C(\R;\dot B_{2,1}^{n/2})$ and the Lipshitz constant is
independent of $\e$. Moreover, for any $T>0$,
\[\lim_{\e\to 0+}\norm{u_\e-u_0}_{C([0,T];\dot B_{2,1}^{n/2})}=0.\]
\end{thm}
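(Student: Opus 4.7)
\textbf{Plan of proof for Theorem \ref{thm:dGL}.} The proof has two essentially independent parts: (i) uniform (in $\e$) small-data global well-posedness of \eqref{eq:dGL} by a fixed-point argument, and (ii) the inviscid limit via a frequency-truncation/approximation argument.

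\textbf{Part 1: uniform well-posedness.} I would set up the Duhamel operator
\[
\Phi_\e(u)(t)=e^{i t\Delta+\e t\Delta}\phi -2i\int_0^t e^{i(t-s)\Delta+\e(t-s)\Delta}\Big(\tfrac{\bar u}{1+|u|^2}\textstyle\sum_j(\p_{x_j}u)^2\Big)(s)\,ds
\]
(with the $\e$-proportional imaginary coefficient treated identically, since the algebraic structure of the nonlinearity is the same). Applying Lemma \ref{lem:linear} and Lemma \ref{lem:nonlinear} yields
\[
\normo{\Phi_\e(u)}_{F^{n/2}\cap Z^{n/2}}\lec \norm{\phi}_{\dot B^{n/2}_{2,1}}+\frac{\norm{u}_{Y^{n/2}}}{1-\norm{u}_{Y^{n/2}}^2}\norm{u}_{F^{n/2}\cap Z^{n/2}}^2,
\]
with implicit constants independent of $\e\in(0,1]$. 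Since $F^{n/2}\cap Z^{n/2}\hookrightarrow Y^{n/2}$, for $\norm{\phi}_{\dot B^{n/2}_{2,1}}\le\delta\ll 1$ the map $\Phi_\e$ leaves the ball of radius $C\delta$ in $F^{n/2}\cap Z^{n/2}$ invariant. A parallel trilinear estimate for differences $u_1-u_2$ (obtained by rerunning the proofs of Lemmas \ref{lem:L2non}--\ref{lem:Nnon} on each factor, together with analyticity of $u\mapsto\bar u/(1+|u|^2)$) gives contraction and Lipschitz dependence on the data with constants independent of $\e$. The case $\e=0$ is handled identically (dropping the $\e\Delta$ term does not change the linear estimates).

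\textbf{Part 2: the inviscid limit.} Fix $T>0$ and $\eta>0$. Since at critical regularity no rate is available directly, I would split $\phi=\phi_K+\phi^K$ with $\phi_K=P_{\le K}\phi$. Choosing $K=K(\eta)$ large we arrange $\norm{\phi^K}_{\dot B^{n/2}_{2,1}}<\eta/(3C_0)$ where $C_0$ is the Lipschitz constant from Part 1. Let $u_\e$, $u_\e^K$ be the solutions with data $\phi$, $\phi_K$ respectively (for each $\e\in[0,1]$). Then
\[
\norm{u_\e-u_0}_{C([0,T];\dot B^{n/2}_{2,1})}\le\norm{u_\e-u_\e^K}_{F^{n/2}}+\norm{u_\e^K-u_0^K}_{F^{n/2}}+\norm{u_0^K-u_0}_{F^{n/2}},
\]
and the first and third terms are bounded by $C_0\norm{\phi^K}_{\dot B^{n/2}_{2,1}}<\eta/3$ uniformly in $\e$ by Part 1. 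For the middle term, the difference $w=u_\e^K-u_0^K$ satisfies
\[
(i\p_t+\Delta-i\e\Delta)w=\mathcal N(u_\e^K)-\mathcal N(u_0^K)+i\e\Delta u_0^K,\qquad w(0)=0,
\]
where $\mathcal N$ denotes the $\e$-dependent right-hand side of \eqref{eq:dGL}; note the $O(\e)$ forcing term arises because $u_0^K$ solves the $\e=0$ equation. Applying Lemma \ref{lem:linear} on $[0,T]$ and the difference version of Lemma \ref{lem:nonlinear} (absorbing a factor $\lec\delta^2$ of the difference into the left) gives
\[
\norm{w}_{F^{n/2}\cap Z^{n/2}}\lec \e\,\normo{\Delta u_0^K}_{N^{n/2}([0,T])}\lec \e T\,\normo{u_0^K}_{L^\infty_t\dot B^{n/2+2}_{2,1}}.
\]
Here I use the trivial embedding $L^1_tL^2_x\hookrightarrow N^{n/2}$ component-wise to absorb $T$, and persistence of regularity at level $\dot B^{n/2+2}_{2,1}$: since $\phi_K$ is spectrally supported in $|\xi|\le 2^{K+1}$, Bernstein's inequality gives $\norm{\phi_K}_{\dot B^{n/2+2}_{2,1}}\lec 2^{2K}\delta$, and the same fixed-point run at regularity $n/2+2$ (using the algebra-type estimate from Lemma \ref{lem:alg} together with Lemma \ref{lem:Nnon} at higher index, whose proofs adapt verbatim) yields a global bound $\norm{u_0^K}_{F^{n/2+2}\cap Z^{n/2+2}}\lec 2^{2K}\delta$. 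Thus, for $K$ fixed, the middle term is $O(\e T\cdot 2^{2K})\to 0$ as $\e\to 0$, so can be made $<\eta/3$ for $\e$ small.

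\textbf{Expected main obstacle.} Part 1 is essentially mechanical given the estimates already established. The delicate step in Part 2 is the persistence of $\dot B^{n/2+2}_{2,1}$ regularity \emph{uniformly in} $\e\in[0,1]$: one must verify that Lemmas \ref{lem:L2non}, \ref{lem:Nnon}, and \ref{lem:alg} extend without loss to the higher-regularity space, which amounts to tracking the summations in $k_3$ with an extra factor $2^{2k_3}$ and verifying they still close — the critical-space proofs are written in a scale-aware manner, so the extensions are straightforward but need checking. A secondary technical point is the time restriction to $[0,T]$: one must use a suitable extension from $[0,T]$ back to $\R$ compatible with the $X_+^{0,b,q}$ spaces (which is routine via the standard extension used in \eqref{eq:X01ext}) so that the bounds from Lemma \ref{lem:linear} apply uniformly in $\e$.
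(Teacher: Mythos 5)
Your proposal is essentially the paper's proof: the same Picard/Duhamel contraction in $F^{n/2}\cap Z^{n/2}$ using Lemma~\ref{lem:linear} and Lemma~\ref{lem:nonlinear} for uniform well-posedness, and the same three-term split via frequency truncation $\varphi\mapsto P_{\leq K}\varphi$, persistence of regularity at $\dot B^{n/2+2}_{2,1}$ uniformly in $\e$, and the $\e T\cdot 2^{2K}\to 0$ limit. One small bookkeeping slip: since $u_0^K$ solves the $\e=0$ equation, the right-hand side of your $w$-equation should be $N_\e(u_\e^K)-N_0(u_0^K)+i\e\Delta u_0^K$ rather than $\mathcal N(u_\e^K)-\mathcal N(u_0^K)+i\e\Delta u_0^K$ with a single $\e$-dependent $\mathcal N$; the extra piece $N_\e(u_0^K)-N_0(u_0^K)$ is $O(\e\delta^3)$ (the paper keeps it in \eqref{eq:limitHigh}) and does not affect the conclusion.
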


For the uniform global well-posedness, we can prove it by standard
Picard iteration argument by using the linear and nonlinear
estimates proved in the previous section.
Indeed, define
\begin{align*}
\Phi_{u_0}(u):=&e^{it\Delta+\e t\Delta}u_0\\
&-i\int_0^t e^{i(t-s)\Delta+\e (t-s)\Delta}\bigg[\frac{2\bar
u}{1+|u|^2}\sum_{j=1}^n(\p_{x_j}u)^2-\frac{2i\e\bar
u}{1+|u|^2}\sum_{j=1}^n(\p_{x_j}u)^2\bigg]ds.
\end{align*}
Then using the Lemma \ref{lem:linear} and Lemma \ref{lem:nonlinear}
we can show $\Phi_{u_0}$ is a contraction mapping in the set
\[\{u:\norm{u}_{F^{n/2}\cap Z^{n/2}}\leq C\delta\}\]
if $\norm{u_0}_{\dot B^{n/2}_{2,1}}\leq \delta$ with $\delta>0$ sufficiently small.  Thus we have existence and uniqueness.
Moreover, by standard arguments we immediately have the persistence of regularity, namely if $u_0\in \dot B^{s}_{2,1}$ for some $s>n/2$, then $u\in F^s\cap Z^s$ and
\begin{align}\label{eq:pers}
\norm{u}_{F^s\cap Z^s}\les \norm{u_0}_{\dot B^{s}_{2,1}}
\end{align}
uniformly with respect to $\e \in (0,1]$.

Now we prove the limit
behaviour.  Assume $u_\e$ is a solution to the Landau-Lifshitz
equation with small initial data $\phi_1\in \dot B^{n/2}_{2,1}$, and
$u$ is a solution to the Schr\"odinger map with small initial data
$\phi_2\in \dot B^{n/2}_{2,1}$. Let $w=u_\e-u$,
$\phi=\phi_1-\phi_2$, then $w$ solves
\begin{align}\label{eq:dGLw}
(i\p_t+\Delta)w=&i\e\Delta u_\e+\bigg[\frac{2\bar u_\e}{1+|u_\e|^2}\sum_{j=1}^n(\p_{x_j}u_\e)^2-\frac{2\bar u}{1+|u|^2}\sum_{j=1}^n(\p_{x_j}u)^2\bigg]\nonumber\\
&-\frac{2i\e\bar u_\e}{1+|u_\e|^2}\sum_{j=1}^n(\p_{x_j}u_\e)^2,\\
w(0)=&\phi.\nonumber
\end{align}
First we assume in addition $\phi_1 \in \dot B^{(n+4)/2}_{2,1}$. By
the linear and nonlinear estimates, for any $T>0$ we get
\begin{align*}
\norm{w}_{F^{n/2}\cap Z^{n/2}}\les \norm{\phi}_{\dot
B^{n/2}_{2,1}}+\e T \norm{u_\e}_{L_t^\infty \dot
B^{(n+4)/2}_{2,1}}+\delta^2\norm{w}_{F^{n/2}\cap
Z^{n/2}}+\e\norm{u_\e}^3_{F^{n/2}\cap Z^{n/2}}.
\end{align*}
Then we get by \eqref{eq:pers}
\begin{align}\label{eq:limitHigh}
\norm{w}_{F^{n/2}\cap Z^{n/2}}\les &\norm{\phi}_{\dot
B^{n/2}_{2,1}}+\e T \norm{\phi_1}_{\dot
B^{(n+4)/2}_{2,1}}+\e\delta^3.
\end{align}
Now we assume $\phi_1=\phi_2=\varphi\in \dot B^{n/2}_{2,1}$ with
small norm. For fixed $T>0$, we need to prove that $\forall\
\eta>0$, there exists $\sigma>0$ such that if $0<\e<\sigma$ then
\begin{align}\label{eq:limitHs}
\norm{S_T^\e(\varphi)-S_T(\varphi)}_{C([0,T];\dot
B^{n/2}_{2,1})}<\eta
\end{align}
where $S_T^\e$ is the solution map corresponding to \eqref{eq:dGLw}
and $S_T=S_T^0$. We denote $\varphi_K=P_{\leq K}\varphi$. Then we
get
\begin{align*}
&\norm{S_T^\e(\varphi)-S_T(\varphi)}_{C([0,T];\dot
B^{n/2}_{2,1})}\nonumber\\
\leq&\norm{S_T^\e(\varphi)-S_T^\e(\varphi_K)}_{C([0,T];\dot
B^{n/2}_{2,1})}\nonumber\\
&+\norm{S_T^\e(\varphi_K)-S_T(\varphi_K)}_{C([0,T];\dot
B^{n/2}_{2,1})}+\norm{S_T(\varphi_K)-S_T(\varphi)}_{C([0,T];\dot
B^{n/2}_{2,1})}.
\end{align*}
From the uniform global well-posedness and \eqref{eq:limitHigh}, we
get
\begin{align}
\norm{S_T^\epsilon(\varphi)-S_T(\varphi)}_{C([0,T];\dot
B^{n/2}_{2,1})}\les \norm{\varphi_K-\varphi}_{\dot B^{n/2}_{2,1}}+\e
C(T,K, \norm{\varphi}_{\dot B^{n/2}_{2,1}}).
\end{align}
We first fix $K$ large enough, then let $\e$ go to zero, therefore
\eqref{eq:limitHs} holds.

\subsection*{Acknowledgment}
Z. Guo is supported in part by NNSF of China (No.11371037), and
C. Huang is supported in part by NNSF of China (No. 11201498).


\begin{thebibliography}{99}
\bibitem{Bej}I. Bejenaru, On Schr\"odinger maps, Amer. J. Math. 130 (2008), 1033-1065.
\bibitem{Bej2} I. Bejenaru, Global results for Schr\"odinger maps in dimensions $n\geq 3$, Comm.
Partial Differential Equations 33 (2008), 451-477.
\bibitem{BIK} I. Bejenaru, A. D. Ionescu, and C. E. Kenig, Global existence and uniqueness
of Schr\"odinger maps in dimensions $d\geq 4$, Adv. Math. 215
(2007), 263-291.
\bibitem{BIKT} I. Bejenaru, A. D. Ionescu, C. E. Kenig and D. Tataru, Global Schr\"odinger maps in dimensions $d\geq 2$:
small data in the critical Sobolev spaces, Annals of Mathematics 173
(2011), 1443-1506.
\bibitem{CS} N. H. Chang, J. Shatah, K. Uhlenbeck, Schr\"odinger maps, Comm.
Pure Appl. Math. 53 (2000), 590-602.

\bibitem{Ding} W. Ding and Y. Wang, Local Schr\"odinger flow into K\"ahler manifolds, Sci. China Ser. A, 44
(2001), 1446-1464.
\bibitem{GuoDing} B. Guo and S. Ding. Landau-Lifshitz equations, volume 1 of
Frontiers of Research with the Chinese Academy of Sciences. World
Scientific Publishing Co. Pte. Ltd., Hackensack, NJ, 2008.
\bibitem{Guo} Z. Guo, Spherically averaged maximal function and scattering for the 2D cubic derivative Schr\"odinger
equation, to appear Int. Math. Res. Notices.
\bibitem{Guo2} Z. Guo and B. Wang, Global
well posedness and inviscid limit for the Korteweg-de Vries-Burgers
equation, J. Differential Equations 246 (2009) 3864-3901.
\bibitem{HWG} L. Han, B. Wang and B. Guo, Inviscid limit for the derivative Ginzburg-Landau equation with small data in modulation and Sobolev spaces,
Appl. Comput. Harmon. Anal. 32(2012),no.2, 197-222.

\bibitem{HK} T. Hmidi and S. Keraani, Inviscid limit for the two-dimensional N-S system in a critical Besov space, Asymptot. Anal., 53 (3) (2007), 125-138.
\bibitem{HW} C. Huang, B. Wang, Inviscid limit for the energy-critical complex Ginzburg-Landau equation, J. Funct. Anal., 255 (2008), 681-725.
\bibitem{IK} A. D. Ionescu and C. E. Kenig, Low-regularity Schr\"odinger maps, Differential Integral Equations 19
(2006), 1271-1300.
\bibitem{IK2} A. D. Ionescu and C. E. Kenig, Low-regularity Schr\"odinger maps,
II: global well-posedness in dimensions $d\geq 3$, Comm. Math.
Phys., 271 (2007), 523-559.
\bibitem{KPV} C. E. Kenig, G. Ponce, and L. Vega, Smoothing effects and local existence theory for the generalized
nonlinear Schr\"odinger equations, Invent. Math., 134 (1998),
489-545.
\bibitem{KT}{M. Keel and T. Tao, Endpoint Strichartz estimates, Amer. J. Math., {120} (1998), 360--413.}
\bibitem{Lak} M. Lakshmanan. The fascinating world of the
Landau-Lifshitz-Gilbert equation: an overview. Philos. Trans. R.
Soc. Lond. Ser. A Math. Phys. Eng. Sci., 369 (1939):1280-1300, 2011.
\bibitem{LakNak} M. Lakshmanan and K. Nakamura, Landau-Lifshitz Equation of Ferromagnetism: Exact Treatment of the Gilbert Damping, Phy. Rev. Let. 53 (1984), NO. 26, 2497-2499.
\bibitem{LanLif} L. D. Landau and E. M. Lifshitz, On the theory of the
dispersion of magnetic permeability in ferromagnetic bodies, Phys.
Z. Sovietunion. 8 (1935) 153-169.
\bibitem{Tao01} T. Tao, Global regularity of wave maps II. Small energy in two dimensions, Commun. Math.
Phys. 224  (2001), 443-544.
\bibitem{Tataru98}D. Tataru, Local and global results for the wave maps I, Comm. Partial Differential Equations,
23 (1998), no. 9-10, 1781-1793.

\bibitem{book} B. Wang, Z. Huo, C. Hao and Z. Guo, Harmonic Analysis Method for Nonlinear Evolution Equations, I, World Scientific Press, 2011.
\bibitem{Wang02} B. Wang, The limit behavior of solutions for the Cauchy problem of the Complex Ginzburg-Landau
equation, Commu. Pure. Appl. Math., 55 (2002), 0481-0508.
\bibitem{WW} B. Wang and Y. Wang, The inviscid limit for the derivative Ginzburg-Landau equations, J. Math.
Pures Appl., 83 (2004), 477-502.




\end{thebibliography}
\end{document}